\newcommand{\ncm}{\newcommand}
\newtheorem{theorem}{Theorem}[section]
\newtheorem{prop}[theorem]{Proposition}
\newtheorem{lemma}[theorem]{Lemma}
\newtheorem{cor}[theorem]{Corollary}
\newtheorem{lem&def}[theorem]{Lemma \& Definition}
\newtheorem{definition}[theorem]{Definition}
\newtheorem{example}[theorem]{Example}
\def\M{\mathcal{M}}
\def\C{\mathbb{C}\,} 
\def\Z{\mathbb{Z}\,} 
\def\N{\mathbb{N}\,}
\ncm{\End}{\mbox{\rm End}\,}
\def\Hom{\mbox{\rm Hom}}
\def\|{\, | \,}
\def\id{\mbox{\rm id}}
\def\into{\hookrightarrow}
\ncm{\rarr}[1]{\stackrel{#1}{\longrightarrow}}
\ncm{\larr}[1]{\stackrel{#1}{\longleftarrow}}
\def\-2{_{(-2)}}
\def\-1{_{(-1)}}
\def\0{_{(0)}}
\def\1{_{(1)}}
\def\2{_{(2)}}
\def\3{_{(3)}}
\def\du1{\hat 1}
\begin{document}
\title[Uniquely separable extensions]{Uniquely separable extensions}
\author[L.~Kadison]{Lars Kadison} 
\address{Department of Mathematics \\ 209 S.\ 33rd St.\\ David Rittenhouse Lab \\
Philadelphia, PA 19104} 
\email{lkadison@math.upenn.edu} 
\thanks{}
\subjclass{12F10, 13B02, 16D20, 16H05, 16S34}  
\keywords{separable extension, H-separable extension, centrally projective, uniquely separable extension, depth,  uniquely split extension, group algebras}
\date{} 

\begin{abstract}
The separability tensor element of a separable extension of noncommutative rings is an idempotent when viewed in the correct endomorphism ring; so one speaks of a separability idempotent, as one usually does for separable algebras. 
It is proven that this  idempotent is full if and only the H-depth is 1 (H-separable extension).  Similarly, a split extension has 
a bimodule projection; this idempotent is full if and only if the ring extension has depth 1 (centrally projective extension). Separable and split extensions have separability idempotents and bimodule projections in 1 - 1 correspondence via an endomorphism ring theorem in Section~3.  If the separable idempotent is unique, then the separable extension is called uniquely separable. A Frobenius extension with  invertible $E$-index is uniquely separable if the centralizer equals the center of the over-ring.  It is also shown that
a uniquely separable extension of semisimple complex algebras with invertible E-index has depth  1.  
 Earlier group-theoretic results are recovered and related to depth $1$. The dual notion, uniquely split extension, only occurs trivially for finite group algebra extensions over complex numbers. 
\end{abstract} 
\maketitle

\section{Introduction and Preliminaries}
\label{one}

The classical notion of separable algebra is one of a semisimple algebra that remains semisimple under every base field extension.  The  approach of Hochschild to Wedderburn's theory of associative algebras in the Annals was cohomological, and characterized a separable $k$-algebra $A$ by having a separability idempotent in $A^e = A \otimes_k A^{\rm op}$.  A separable extension of noncommutative rings is characterized similarly by possessing separability elements in \cite{HS}:  separable extensions are shown to be left and right semisimple extensions in terms of Hochschild's relative homological algebra (1956).  In related developments around 1960, also separable
algebras over commutative rings, Galois and Brauer theory of commutative rings were first defined and studied by Auslander and Goldman.   It is pointed
out in Section 1 of this paper that the separability element of a (unital) ring extension of noncommutative rings
$B \into A$ is  an idempotent in $(A \otimes_B A)^B$ ($ \cong \End {}_AA \otimes_B A_A$).  We prove that this idempotent is full if and only if the extension is H-separable, a strong condition generalizing the notion of Azumaya algebra.      

Separability idempotents in $A \otimes_B A$ for a separable extension $A \supseteq B$ are generally not 
unique.  For example, if $A = M_n(B)$ there are $n$ different separability idempotents, and their convex combinations, defined as in Example~\ref{example-matrixalgebra}, so that uniqueness can only happen for $n = 1$.  For example, given a finite-dimensional  group algebra $A = kG$ over an algebraically closed field $k$ with characteristic not dividing the order of $G$, one sees from the Wedderburn decomposition of semisimple algebras that $A$ has unique separability idempotent if and only if $G$ is abelian. The papers \cite{SH,S} describe a condition on a subgroup $H$ of a finite
group $G$  that is equivalent to $A = KG \supseteq B = KH$ having a unique separability element
where $K$ is a commutative ring in which $|G:H|1$ is invertible:  their condition  is that each conjugacy class of $G$ be an $H$-orbit. For example, this happens if $G = HZ(G)$ where $Z(G)$ is the center of $G$.  In Section~\ref{three} we generalize this to a separable Frobenius extension $A \supseteq B$ having invertible $E$-index in the center $Z(A)$.  We show in Theorem~\ref{th-SH} that $A$ has a unique separability element over $B$ if and only if the centralizer
$A^B = Z(A)$.  If $K$ is an algebraically closed field of characteristic zero, it follows from the trivial observation 
$Z(B) \subseteq A^B = Z(A)$ and Burciu's characterization of depth one in \cite{B} that $A$ is centrally projective over $B$:  ${}_BA_B \oplus * \cong n \cdot {}_BB_B$, or $A$ as a natural $B$-bimodule is isomorphic to a direct summand of a finite direct sum of copies of $B$.   It is shown in \cite{LK2012} that such extensions automatically
satisfy ${}_BB_B \oplus * \cong {}_BA_B$, so that   a centrally projective extension $A \supseteq B$ is characterized by the bimodules ${}_BA_B$ and ${}_BB_B$ being similar \cite{AF}.  

Centrally projective extensions are the depth $1$ case of odd minimal depth $d(B,A) = 2n+1$ where a 
ring extension $A \supseteq B$ has similar (natural) $B$-$B$-bimodules $A^{\otimes_B n}$
and $A^{\otimes_B (n+1)} := A \otimes_B \cdots \otimes_B A$ ($n+1$ times $A$).  The H-separable extension $A \supseteq B$ defined in \cite{Hir} is the H-depth $1$ case of odd minimal H-depth $d_H(B,A) = 2n-1$ where $A^{\otimes_B n} \sim A^{\otimes_B (n+1)}$ as $A$-bimodules.  See below
in this section for more details on depth and H-depth.  As mentioned, we show in two propositions of Section 1 that H-depth one and depth one ring extensions are characterizable in terms of a full idempotent in an endomorphism ring above the ring extension (in fact, isomorphic to the centralizer rings of the Jones tower that receive Hopf structure in  \cite{KS}).  
In Sections~4 and~5 of this paper we show that unique separable subalgebra pairs of semisimple complex algebras with invertible $E$-index have depth $1$.  This is compared to results of \cite[Singh, Hanna]{SH,S} and \cite[Boltje-K\"ulshammer]{BK}
on uniquely separable group ring extensions and  subgroups of finite groups having depth $1$. We show in Section~3 that unique separable extensions have endomorphism ring extensions that are uniquely split extensions, and we characterize uniquely split extensions of group algebras, or more generally split Frobenius extensions, by having centralizer equal to the center of the subalgebra.  We prove a triviality theorem for uniquely split group algebra extensions over the complex numbers, again in Section~3. 

\subsection{Separable extensions and finite-dimensional algebra extenions}
A ring extenions $R \supseteq S$ is a separable extension if $\mu: R\otimes_S R \rightarrow R$
splits as an $R$-$R$-bimodule epimorphism.  This is clearly equivalent to there being an element
$e \in R \otimes_S R$ such that $re = er$ for every $r \in R$ and $\mu(e) = 1$.
(Briefly, one writes $e \in (R \otimes_S R)^R$ and $e^1e^2 = 1$ and calls such an $e$ a separability element.)  Separable extensions are characterized by having relative Hochschild cohomological dimension zero, and  a separable extension $R \supseteq S$ satisfies the inequality in right global dimension of rings $D(S) \geq D(R)$ if $R_S$
is projective \cite{HS}. 

Note that the $S$-central elements in $R \otimes_S R$, denoted by $T := (R \otimes_S R)^S$ are isomorphic to the endomorphism ring $\End {}_RR \otimes_S R_R$
via $$t  \longmapsto (r \otimes_S r' \mapsto rtr').$$
The inverse mapping is of course $F \mapsto F(1 \otimes 1)$.  This transfers a ring
structure onto $T$ given in Sweedler-like notation by
$t', t = t^1 \otimes t^2 \in (R \otimes_S R)^S$ and
$$ tt' = {t'}^1 t^1 \otimes   t^2 {t'}^2, \ \ \ 1_T = 1_R \otimes_S 1_R.$$
A separability element $e = e^1 \otimes_S e^2 \in (R \otimes_S R)^R \subseteq T$ and satisfies $e^2 = e$ in this multiplication, since $e^1 e^2 = 1_R$.  Thus it
makes sense to continue calling it a \textit{separability idempotent}, in continuation of the terminology in the special case of separable algebras (over a commutative ring)  when $S \subseteq Z(R)$, the center of $R$.  

The rest of the subsection studies  theorems that are useful for identifying when subalgebras of a finite-dimensional algebra form a separable extension, or not.  When the subalgebra is trivially one-dimensional, the important theorem for identifying a separable algebra states the following \cite[10.7]{P}:
"An algebra is separable if and only if it is semisimple with block matrix algebras over division algebras whose centers are separable field extensions of the ground field ."  The following lemma compiles the most useful results towards this end from \cite{HS} and \cite{CK}, with easy proofs (for not necessarily finite-dimensional algebras).

\begin{lemma}
The following holds for a separable extension $A \supseteq B$.  
\begin{itemize}
\item (A) If $\pi: A \rightarrow A'$ is an algebra epimorphism, with $\pi(B) = B'$, then $A'$ is a separable extension of $B'$.
\item (B) If $T$ is an intermediate ring $A \supseteq T \supseteq B$, then $A$ is a separable extension of $T$.
\item (C) If there is an algebra epimorphism $\pi: A \rightarrow B$, which splits $B \into A$, and $I$ denotes $\ker \pi$, then $I^2 = I$.  
\item (D) Suppose $I$ is an ideal of $A$,  and subalgebras $A' \supseteq B'$ satisfy $A = A' \oplus I$
and $B = B' \oplus I$. Then $A' \supseteq B'$ is a separable extension if and only if $A \supseteq B$
is a separable extension.
\end{itemize}
\end{lemma}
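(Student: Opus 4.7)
My plan is to treat each of the four parts in turn, using in each case the separability element $e = e^1 \otimes_B e^2 \in (A \otimes_B A)^A$ with $e^1 e^2 = 1_A$.

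For (A), since $\pi$ is a surjective ring homomorphism with $\pi(B) = B'$, the map $\pi \otimes \pi$ descends to a well-defined surjection $A \otimes_B A \twoheadrightarrow A' \otimes_{B'} A'$; the image of $e$ is $A'$-central by surjectivity of $\pi$ and multiplies to $\pi(1) = 1$, so it is a separability element for $A' \supseteq B'$. For (B), the natural surjection $A \otimes_B A \twoheadrightarrow A \otimes_T A$ (coming from imposing the additional $T$-balancing relations, since $B \subseteq T$) is an $A$-bimodule map that sends $e$ to an element which is still $A$-central and still multiplies to $1$.

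The substantive step is (C). The split hypothesis $\pi|_B = \id_B$ makes $\pi$ a $B$-bimodule map, so $\id \otimes \pi \colon A \otimes_B A \to A \otimes_B B = A$ is well defined. Using the Peirce decomposition $A = B \oplus I$ and the identity $b \otimes_B a = 1 \otimes_B ba$ for $b \in B$, I would rewrite the separability element in the normal form
\[ e \ =\ 1 \otimes_B c \ +\ d \otimes_B 1 \ +\ \sum_k u_k \otimes_B v_k, \qquad c \in A,\ \ d, u_k, v_k \in I. \]
Applying $\mu$ gives $c + d \equiv 1 \pmod{I^2}$, and applying $\pi$ to this congruence shows $\pi(c) = 1$, so $c = 1 + c'$ with $c' \in I$. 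Now for any $x \in I$, applying $\id \otimes \pi$ to the centrality identity $xe = ex$ kills every term on the right-hand side (since $\pi(x) = 0$ and $\pi$ is multiplicative) and leaves $x \pi(c) + xd = x(1 + d)$ on the left; thus $x = -xd \in I \cdot I = I^2$, giving $I \subseteq I^2$ and hence $I^2 = I$. The main obstacle is seeing that $A$-centrality of $e$ (not merely $e^1 e^2 = 1$) must be used, via the projection $\id \otimes \pi$ rather than $\mu$.

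For (D), the forward direction is an instance of (A) applied to the quotient map $A \twoheadrightarrow A/I \cong A'$, which restricts to $B \twoheadrightarrow B/I \cong B'$. For the converse, given a separability element $e' = \sum_j a'_j \otimes_{B'} b'_j$ for $A' \supseteq B'$, I would take its image $e$ under the natural map $A' \otimes_{B'} A' \to A \otimes_B A$ induced by the inclusions $B' \into B$, $A' \into A$. Multiplication still yields $1$, and $A'$-centrality of $e'$ passes to $e$. For $i \in I$ one exploits that $i \in B$ together with $i a'_j, b'_j i \in I \subseteq B$, so that $B$-balancing collapses both $ie$ and $ei$ to $1 \otimes_B i = i \otimes_B 1$; combined with the $A'$-centrality and the decomposition $A = A' + I$, this yields full $A$-centrality of $e$.
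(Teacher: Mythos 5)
Your four arguments are all correct. The paper itself gives no proof of this lemma --- it is stated as a compilation from Hirata--Sugano and Caenepeel--Kadison ``with easy proofs'' --- so there is nothing to compare against line by line; your write-up is a legitimate filling-in, and parts (A), (B) and (D) are exactly the standard arguments (push the separability element forward along $\pi\otimes\pi$, along $A\otimes_B A\twoheadrightarrow A\otimes_T A$, and along $A'\otimes_{B'}A'\to A\otimes_B A$ respectively; your observation in (D) that for $x\in I\subseteq B$ both $xe$ and $ex$ collapse to $1\otimes_B x = x\otimes_B 1$ via $B$-balancing is the right way to get $A$-centrality, and the forward direction is indeed just (A) applied to $A\to A/I$). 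For (C) your argument is sound, but the normal form $e = 1\otimes c + d\otimes 1 + \sum_k u_k\otimes v_k$ is unnecessary scaffolding: setting $u := e^1\pi(e^2) = (\mathrm{id}\otimes\pi)(e)$, one has $\pi(u)=\pi(e^1e^2)=1$, hence $u = 1+w$ with $w\in I$, while applying $\mathrm{id}\otimes\pi$ to $xe=ex$ gives $xu = e^1\pi(e^2)\pi(x)=0$ directly, so $x=-xw\in I^2$; your $d$ is exactly this $w$, so the two computations coincide once the decomposition is stripped away. One small point worth making explicit in (D) is that $1_{A'}=1_A$ (and $1_{B'}=1_B$), which follows from the directness of $A=A'\oplus I$ under the paper's standing convention that subalgebras are unital; this is what guarantees $\sum_j a'_jb'_j = 1_A$ after transporting $e'$ into $A\otimes_B A$.
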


\begin{prop}
\label{prop-rad}
Suppose $A$ is a finite-dimensional algebra over a perfect field with subalgebra $B$ with radical
$J(B)$ an ideal of $A$.  Then $A \supseteq B$ is a separable extension if and only if $J(A) = J(B)$.
\end{prop}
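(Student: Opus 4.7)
I would prove the two directions separately: the forward direction via part (A) of the preceding lemma combined with the global-dimension inequality for separable extensions recorded in the subsection, and the backward direction via Wedderburn--Malcev combined with part (D).

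For ($\Rightarrow$), assume $A \supseteq B$ is separable. Since $J(B)$ is a nilpotent ideal of $B$ and by hypothesis an ideal of $A$, it is a nil ideal of $A$, so $J(B) \subseteq J(A)$ automatically. For the reverse inclusion, the projection $\pi : A \to A/J(B)$ is an algebra epimorphism with $\pi(B) = B/J(B)$, so part (A) of the lemma above makes $A/J(B) \supseteq B/J(B)$ a separable extension. Because $B$ is finite-dimensional, $B/J(B)$ is semisimple Artinian, hence every module over it is projective; in particular $(A/J(B))_{B/J(B)}$ is projective, and the inequality $D(A/J(B)) \le D(B/J(B)) = 0$ of \cite{HS} then forces $A/J(B)$ to be semisimple, i.e.\ $J(A/J(B)) = 0$. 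Thus $J(A) \subseteq J(B)$.

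For ($\Leftarrow$), set $I := J(A) = J(B)$. By Wedderburn--Malcev over the perfect field $k$, choose a semisimple subalgebra $A' \subseteq A$ with $A = A' \oplus I$ and set $B' := A' \cap B$. For $b \in B$, writing $b = a' + j$ with $a' \in A'$ and $j \in I \subseteq B$ forces $a' = b - j \in B$, so $a' \in B'$; and $B' \cap I \subseteq A' \cap I = 0$. Thus $B = B' \oplus I$, and part (D) of the lemma reduces the question to whether $A' \supseteq B'$ is a separable extension. But $A'$ is finite-dimensional semisimple over a perfect field, so it is a separable $k$-algebra; pushing any separability idempotent of $A' \otimes_k A'$ forward along the canonical $A'$-bimodule surjection $A' \otimes_k A' \twoheadrightarrow A' \otimes_{B'} A'$ then furnishes a separability element for $A' \supseteq B'$.

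The only genuinely delicate point is arranging compatible Wedderburn complements $B' \subseteq A'$: on its own, Wedderburn--Malcev only asserts existence and conjugacy of a complement, so one should not expect an arbitrary decomposition of $B$ to extend to one of $A$. The hypothesis $J(A) = J(B)$ rescues us, because it makes the naive guess $B' := A' \cap B$ automatically have the correct codimension in $B$; once that is noticed, the rest of the backward direction is purely formal.
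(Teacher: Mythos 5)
Your proof is correct, and the two directions relate differently to the paper's argument. The backward direction is essentially the paper's: a Wedderburn--Malcev complement $A'$ of $J(A)=J(B)$, the decomposition $B=(A'\cap B)\oplus J(B)$ (which you verify explicitly where the paper only asserts it --- a worthwhile addition, since this is exactly where the hypothesis $J(A)=J(B)$ enters), reduction by part (D), and separability of $A'\supseteq B'$ from separability of the $k$-algebra $A'$; your push-forward of a separability idempotent along $A'\otimes_k A'\rightarrow A'\otimes_{B'}A'$ is just part (B) unwound for the tower $A'\supseteq B'\supseteq k$. The forward direction is a genuinely different route. The paper also passes to $A/J(B)\supseteq B/J(B)$ via part (A), but then uses separability of $B/J(B)$ over the perfect field to split the canonical epimorphism onto the semisimple quotient and invokes part (C) to conclude that the nilpotent ideal $I=J(A)/J(B)$ satisfies $I^2=I$, hence $I=0$. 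You instead quote the Hirata--Sugano global-dimension inequality $D(S)\geq D(R)$ recorded in the same subsection: since $B/J(B)$ is semisimple, $A/J(B)$ is automatically projective over it, so $D(A/J(B))\leq 0$ forces $A/J(B)$ semisimple and $J(A)\subseteq J(B)$. Your version buys independence from the perfect-field hypothesis in that direction (perfectness is then visibly needed only for the converse), at the cost of importing a homological result rather than staying inside the elementary Lemma (A)--(D) toolkit, which is what lets the paper's remark extend the proposition to non-perfect ground fields under explicit separability assumptions on the quotients. Both arguments are complete.
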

\begin{proof}
Suppose $J(A) = J(B) = J$.  Then $J^n = 0$ for some $n \in \N$.  By the Wedderburn principal theorem \cite[11.6]{P}, there is a separable subalgebra $A'$ such that $A = A' \oplus J$.  Let $B' = B \cap A'$,
a subalgebra of $A'$ forming a separable extension by Lemma (B).  Since $B = B' \oplus J$, it follows
from Lemma (D) that $A$ is a separable extension of $B$.  

Suppose $A$ is separable extension of $B$ where $J(B)$ is an ideal in $A$.  Since $J(B)$ is nilpotent,
one has $J(B) \subseteq J(A)$.  Let $A' = A/J(B)$ and $B' = B/J(B)$.  By Lemma (A), $A'$ is a separable extension of $B'$.  The radical of $A'$ is $I := J(A)/J(B)$, and is the kernel of the canonical epi $A \rightarrow A'$.  Since $B'$ is a separable algebra (over a perfect field), the canonical epi splits as an algebra mapping, so that Lemma (C) implies $I^2 = I$.  Suppose $J(A)^n = 0$.  Then $I = I^{2^n} = 0$, whence $J(A) = J(B)$.  
\end{proof}

The proposition also works without the hypothesis on the ground field if $A/J(A)$ and $B /J(B)$ are known to be separable algebras.  This follows from Hochschild's proof extending the Wedderburn principal theorem \cite{P}.  
\begin{example}
\begin{rm}
Let $A = T_n(k)$ be the algebra upper-triangular $n$-by-$n$ matrices over any field $k$, $B = U(n)$ be
the subalgebra strictly upper-triangular matrices with $1$.  It follows from $J(A) = J(B)$ and the proposition that $A$
is a separable extension of $B$. 

Suppose  $C = k1 + k(e_{12}+ \cdots + e_{n-1,n}) + \cdots + ke_{1n}$ be the Jordan subalgebra $J_n(k) $ within $B$ and $A$.  (Note that $D(A) = 1$ and $D(C) = \infty$, since $A$ is hereditary and $C \cong k[X]/(X^n)$ is Frobenius.)  Except in the case $n = 1,2$, it is a thornier problem to determine  separability of the extension $A \supseteq C$, since 
$A \supseteq C$ may be either separable or inseparable in a general tower $A \supseteq B \supseteq C$, when $A \supseteq B$ is separable
and $B \supseteq C$ is inseparable (i.e., not a separable extension). 
\end{rm}
\end{example}
Even when the radical of $B$ is not an $A$-ideal, it is often enough to maneuver as follows with the information in the Lemma above.  
\begin{example}
\begin{rm}
Let $k$ be a field and $B$ be the Sweedler-Nakayama algebra $B = k(e_{11} + e_{44}) + k(e_{22} + e_{33}) + ke_{21} + ke_{43}$ in terms of matrix units in $M_4(k)$.  Consider $B$ as a subalgebra
of $A = ke_{11} + ke_{22} + ke_{33} + ke_{44} + ke_{31} + ke_{41} + ke_{42} + ke_{43}$,
a structural matrix subalgebra of $M_4(k)$.  Note that $J(B)$ does not satisfy the hypothesis of the
proposition, so we augment it to $J = ke_{21} + ke_{43} + ke_{41}$, an $A$-ideal such that
$J \cap B = J(B)$.  Then $A \rightarrow A/J$ maps $B$ onto $B' \cong k^2$, a separable $k$-algebra,
but $A' := A/J$ has radical $I = ke_{31} + ke_{42}$.  If $A'$ is separable over $B'$, then it is separable over its diagonal subalgebra $\cong k^4$; by Lemma (D) this implies $I = 0$ a contradiction.  Then
$A'$ is not separable over $B'$ and by Lemma (A), $A$ is not separable over $B$.
\end{rm}
\end{example}

\begin{example}
\label{example-matrixalgebra}
\begin{rm}
Let $K$ be a commutative ring and $A = M_n(K)$ the full $K$-algebra of $n \times n$ matrices.  Let $e_{ij}$ denote the matrix units ($i,j = 1,\ldots, n$). Any of the
$n$ elements $e_j = \sum_{i=1}^n e_{ij} \otimes_K e_{ji}$ are separability idempotents for $A$ (as well as any convex combination of  $e_1,\ldots,e_n$). 
\end{rm}
\end{example}

\subsection{Preliminaries on subalgebra depth} Let $A$ be a unital associative ring.  The category of  right modules over $A$ will be denoted
by $\M_A$.  
Two modules $M_A$ and $N_A$ are \textit{H-equivalent} (or similar) if $M \oplus * \cong  N^q $ and $N \oplus * \cong  M^r$
for some $r,q \in \N$ (sometimes briefly denoted by  $M \sim N$). It is well-known that H-equivalent modules have Morita equivalent endomorphism rings.  

 Let $B$ be a subring of $A$ (always supposing $1_B = 1_A$).  Consider the natural bimodules ${}_AA_A$, ${}_BA_A$, ${}_AA_B$ and
${}_BA_B$ where the last is a restriction of the preceding, and so forth.  Denote the tensor powers
of ${}_BA_B$ by $A^{\otimes_B n} = A \otimes_B \cdots \otimes_B A$ for $n = 1,2,\ldots$, which is also a natural bimodule over  $B$ and $A$ in any one of four ways;     set $A^{\otimes_B 0} = B$ which is only a natural $B$-$B$-bimodule.  

\begin{definition}
\label{def-depth}
If $A^{\otimes_B (n+1)}$ is H-equivalent to $A^{\otimes_B n}$ as $X$-$Y$-bimodules,  one says $B \subseteq A$ has  
\begin{itemize}
\item depth $2n+1$ if $X = B = Y$;
\item left depth $2n$ if $X = B$ and $Y = A$;
\item right depth $2n$ if $X = A$ and $Y = B$;
\item H-depth $2n-1$ if $X = A = Y$.
\end{itemize}
valid for even depth and H-depth if $n \geq 1$ and for odd depth if $n \geq 0$. 
\end{definition}

 For example, $B \subseteq A$ has depth $1$ iff ${}_BA_B$ and ${}_BB_B$ are H-equivalent \cite{BK2}.
Equivalently, 
\begin{equation}
\label{eq: d1}
{}_BA_B \oplus * \cong n \cdot {}_B{B}_B
\end{equation} for some $n \in \N$ \cite{LK2012}.  This in turn is
equivalent to there being $f_i \in \Hom ({}_BA_B, {}_BB_B)$ and $r_i \in A^B$ such that
$\id_A = \sum_i f_i(-) r_i$, the classical \textit{central projectivity} condition \cite{Mu}.  
  In this case, it is easy to show that $A$ is ring isomorphic to $B \otimes_{Z(B)} A^B$ where
$Z(B), A^B$ denote the center of $B$ and centralizer of $B$ in $A$. From this one deduces  
\begin{lemma}
\label{lemma-d1}
A centrally projective ring extension $A \supseteq B$ (or depth $1$ extension) has centers
satisfying  $Z(B) \subseteq Z(A)$.  
\end{lemma}

Another example, $B \subset A$ has right depth $2$ iff ${}_AA_B$
and ${}_A A \otimes_B A_B$ are similar.  If $A = \C G$ is a group algebra of a  finite group $G$ and $B = \C H$ is a group algebra of a subgroup $H$ of $G$, then $B \subseteq A$ has right depth $2$ iff $H$ is a normal subgroup of $G$ iff $B \subseteq A$ has left depth $2$ \cite{KK}; a similar statement of normality is true for a Hopf subalgebra $R \subseteq H$ of finite index and over any field \cite{BK}. Depth two is the key condition that generates Hopf algebroid structures on certain endomorphism rings derived from the ring extension $A \supseteq B$ that occur
in the next section.   

Note that $A^{\otimes_B n}\oplus * \cong A^{\otimes_B (n+1)}$ for all $n \geq 2$
and in any of the four natural bimodule structures: one applies $1_A$ and multiplication to obtain a split monic, or split epi oppositely. For three of the bimodule structures, it is true for $n =1$;  as $A$-$A$-bimodules, equivalently $A \oplus * \cong A \otimes_B A$ as $A^e$-modules, this is the separable extension condition on $B \subseteq A$.  
Ring extension theorists will recognize \begin{equation}
\label{eq: H-sep}
{}_AA \otimes_B A_A \oplus * \cong q \cdot {}_AA_A
\end{equation}
 for some $q \in \N$
as the H-separability condition, which implies $A$ is a separable extension of $B$ \cite{NEFE}.  Somewhat similarly, ${}_BA_B \| q  \cdot {}_BB_B$ implies
${}_BB_B \| {}_BA_B$ \cite{LK2012}. It follows that subalgebra depth and H-depth may be equivalently defined by replacing the similarity bimodule conditions for depth and H-depth in Definition~\ref{def-depth} with the corresponding bimodules on 
\begin{equation}
\label{eq: def}
A^{\otimes_B (n+1)} \oplus * \cong  q \cdot A^{\otimes_B n}
\end{equation}
 for some positive integer $q$ \cite{BDK, LK2011, LK2012}.  

Note that if $B \subseteq A$ has H-depth $2n-1$, the subalgebra has (left or right) depth $2n$ by restriction of modules.  Similarly, if $B \subseteq A$ has depth $2n$, it has depth $2n+1$.  If $B \subseteq A$ has depth $2n+1$, it has depth $2n+2$ by tensoring either $-\otimes_B A$ or $A \otimes_B -$ to $A^{\otimes_B (n+1)} \sim A^{\otimes_B n}$.     Similarly, if $B \subseteq A$ has left or right depth $2n$, it has H-depth $2n+1$.  Denote the minimum depth
of $B \subseteq A$ (if it exists) by $d(B,A)$ \cite{BDK}.  Denote the minimum H-depth of $B \subseteq A$ by $d_h(B,A)$.  Note that
$d(B,A) < \infty$ if and only if $d_H(B,A) < \infty$; in fact, $| d(B,A) - d_H(B,A)| \leq 2$ if either is finite.  

Returning briefly to the H-separable (or equivalently H-depth 1) ring extension $B \subseteq A$ satisfying Eq.~(\ref{eq: H-sep}),  we note that $\Hom ({}_AA \otimes_B A_A, {}_AA_A) \cong A^B$ and
$\Hom ({}_AA_A, {}_AA \otimes_B A_A) \cong (A \otimes_B A)^A$, from which it follows that
an equivalent condition for H-depth 1 is that there are $2q$ elements $r_i \in A^B$ and $e_i \in (A \otimes_B A)^A$ such that 
$1 \otimes_B 1 = \sum_{i=1}^q r_i e_i$.  
\begin{lemma}
\label{lemma-hsep}
An H-depth 1 extension $B \subseteq A$ that is faithfully flat satisfies $Z(A) \subseteq Z(B)$.  
\end{lemma}
\begin{proof}
Suppose $x \in Z(A)$.  Then $$x \otimes_B 1 = \sum_i xr_i e_i = \sum_i r_i e_i x = 1 \otimes_B x$$
whence $x \in B$.  
\end{proof}

As examples of algebra extension depth, for the permutation groups $\Sigma_n < \Sigma_{n+1}$
and their corresponding group algebras $B \subseteq A$ over any
commutative ring $K$, one has depth $d(B,A) = 2n-1$ \cite{BKK,BDK}. Depths of subgroups in $PGL(2,q)$, twisted group algebras and Young subgroups of $\Sigma_n$ are computed in \cite{F, D, FKR}.    If $B$ and $A$ are semisimple complex algebras, the minimum odd depth is computed from powers of an order $r$ symmetric matrix with nonnegative entries $S := MM^t$ where $M$ is the inclusion matrix
$K_0(B) \rightarrow K_0(A)$ and $r$ is the number of irreducible representations of $B$ in a basic set of $K_0(B)$; the depth is $2n+1$ if $S^n$ and $S^{n+1}$ have an equal number of zero entries \cite{BKK}.
Similarly, the minimum H-depth of $B\subseteq A$ is computed from
powers of an order $s$ symmetric matrix $T = M^tM$, where $s$ is the rank of $K_0(A)$, and the power $n$ at
which the number of zero entries of $T^n$ stabilizes \cite{LK2012}. 
It follows that the subalgebra pair of semisimple complex algebras $B \subseteq A$   always has finite depth.


\section{Full idempotent characterizations of H-separable and centrally projective ring extensions}

In this section,  we give  characterizations of H-depth 1 and depth 1 ring extension in terms of well-known idempotents being full idempotents. Recall the equivalent form of Morita theory, which states that for a ring $A$ with idempotent
$e \in A$, $eAe$ is Morita equivalent to $A$ so long as $e$ is a \textit{full} idempotent, i.e. $AeA = A$: the Morita context bimodules are $eA$ and $Ae$ \cite{AF}.  Recall from the previous section  the ring structure on $T = (A \otimes_B A)^B$ of a ring extension $A \supseteq B$.  

\begin{prop}
\label{prop-sep}
A separable extension $R \supseteq S$ is H-separable if and only if
a separability idempotent $e \in T$ is full. In this case, the center $Z(R)$ is Morita equivalent to $\End {}_RR \otimes_S R_R$ \cite[Corollary 4.3]{KK}. 
\end{prop}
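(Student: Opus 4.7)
The plan is to exploit the ring isomorphism $T \cong \End({}_RR\otimes_S R_R)$ to translate the statement into a Morita-theoretic question about an idempotent endomorphism of $R\otimes_S R$. Under this isomorphism, a separability idempotent $e = e^1 \otimes e^2 \in (R\otimes_S R)^R \subseteq T$ corresponds to the $R^e$-endomorphism $p_e \colon r \otimes r' \mapsto re^1 \otimes e^2 r' = er r'$. First I would observe that $p_e$ is simply the composition $\sigma \circ \mu$, where $\sigma \colon R \to R\otimes_S R$, $r \mapsto re = er$, is the canonical $R$-bimodule splitting of $\mu$ afforded by $e$. In particular the image of $p_e$ equals $Re = eR$, and $\sigma$ realises an $R$-bimodule isomorphism $R \cong Re$.

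Next I would invoke the following standard Morita fact: if $M$ is a left $A$-module, $T := \End_A(M)$, and $p \in T$ an idempotent with image $N = pM$, then $p$ is full in $T$ (i.e.\ $TpT = T$) if and only if $M \oplus * \cong N^n$ as $A$-modules for some $n \in \N$. The $(\Leftarrow)$ direction is obtained from a splitting $M \hookrightarrow N^n \twoheadrightarrow M$ by decomposing it into components $\iota_i, \pi_i \in T$, rewriting them as $p\iota_i$ and $\pi_i p$, and reading off $1_M = \sum \pi_i p \iota_i \in TpT$; the $(\Rightarrow)$ direction uses a partition $1 = \sum \alpha_i p \beta_i$ to assemble a split monic $M \to N^n$. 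Applied to $A = R^e$, $M = R\otimes_S R$, $N = Re \cong R$, and $p = p_e$, this gives: $p_e$ is full in $T$ iff ${}_R R\otimes_S R_R \oplus * \cong n \cdot {}_R R_R$ for some $n$, which is precisely the H-separability condition recorded in equation~(\ref{eq: H-sep}).

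For the concluding Morita statement, I would note that $p_e T p_e \cong \End_{R^e}(\mathrm{Im}\,p_e) = \End_{R^e}(R) = Z(R)$, where the last identification is the familiar fact that $R$-bimodule endomorphisms of $R$ are multiplications by central elements. When $p_e$ is full, the corner-ring theorem therefore yields that $T = \End_{R^e}(R \otimes_S R)$ is Morita equivalent to $p_e T p_e = Z(R)$.

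The only mildly delicate point is the Morita characterisation of full idempotents via $H$-equivalence of images; once that is in hand, the rest is a matter of recognising $p_e = \sigma \mu$, whose image is the canonical ${}_R R_R$-summand of ${}_R R \otimes_S R_R$. Both directions of the iff then reduce to comparing $(\ref{eq: H-sep})$ with the summand produced by any (even non-unique) separability idempotent, so no hypothesis beyond the mere existence of $e$ is needed.
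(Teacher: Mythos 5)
Your proof is correct, and it reaches the same two key identifications as the paper ($Te = (R\otimes_S R)^R$ as the image data, and $eTe \cong Z(R)$ via $\mu$), but it organizes the equivalence differently. The paper computes the trace ideal explicitly as $TeT = R^S\,(R\otimes_S R)^R$ (using that $t^1t^2 \in R^S$ for $t \in T$) and then matches $1_T \in TeT$ against the elementwise criterion for H-separability from \cite{NEFE}, namely $1_T = \sum_i c_i e_i$ with $c_i \in R^S$ and $e_i \in (R\otimes_S R)^R$ --- a criterion it treats as known. You instead recognize $p_e = \sigma\mu$ as the projection onto the canonical copy of ${}_RR_R$ inside ${}_RR\otimes_S R_R$ and invoke the general Morita lemma that an idempotent $p \in \End_A(M)$ is full if and only if $M \oplus * \cong (pM)^n$, which lands directly on Eq.~(\ref{eq: H-sep}). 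This buys you two things: the argument is self-contained (no appeal to the elementwise H-separability criterion, which the paper itself only sketches as ``an exercise''), and it makes transparent that fullness is independent of which separability idempotent one picks, since every such $e$ has image isomorphic to ${}_RR_R$. What the paper's version buys in exchange is the concrete description of the ideal $TeT$ as $R^S(R\otimes_S R)^R$, which is reused implicitly elsewhere. Your treatment of the final Morita statement, $p_eTp_e \cong \End {}_RR_R \cong Z(R)$ followed by the corner-ring theorem, is the same as the paper's up to the identification $T \cong \End {}_RR\otimes_S R_R$. One small point worth stating explicitly if you write this up: extending the components $\pi_i : pM \to M$ of the splitting to all of $M$ by precomposing with $p$ is what lets you write $1_M = \sum_i(\pi_i p)(p\iota_i) \in TpT$; you gesture at this but it is the only place the idempotency of $p$ is genuinely used in that direction.
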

\begin{proof}
A ring extension $R \supseteq S$ is H-separable if and only if there are $q$ elements $e_i \in T$ and $q$ elements $c_i \in R^S$ (the centralizer of $S$ in $R$) such that
$1_T = \sum_{i=1}^q c_i e_i$ \cite{NEFE}, which may be seen in an exercise using Eq.~(\ref{eq: H-sep}) and using $q$ bimodule homomorphisms to and from ${}_RR_R$ and ${}_RR \otimes_S R_R$ composed and summing up to the identity
mapping on $R \otimes_S R$. It was  pointed out that H-separable extensions
are separable.  

A separability idempotent $e$ in $T$ is \textit{full} if $TeT = T$ (equivalently,
the left ideal $(R \otimes_S R)^R = Te$ is a progenerator). But the two-sided ideal 
$TeT = R^S (R \otimes_S R)^R$, since for any $t \in T$, $t^1 t^2 \in R^S$
(where $t = t^1 \otimes t^2$ suppresses a possible summation).  
Thus, $1_T \in TeT$ if and only if $R$ is an H-separable extension of $S$.  

Note that $eTe \cong Z(R)$ via $ef \mapsto f^1f^2$ via the multiplication mapping $\mu: R \otimes_S R \rightarrow R$, since $Te = (R \otimes_S R)^R$ and for $f \in (R \otimes_S R)^R$,  $ef = f^1f^2 e =
efe$. An inverse ring homomorphism is given by $z \mapsto ze$.  
\end{proof}
Recall that a ring extension $R \supseteq S$ is split if ${}_SS_S \oplus * \cong {}_SR_S$.  Equivalently,
there is a bimodule projection $E$ from $R$ onto $S$ satisfying $E^2 = E \in \End {}_SR_S := U$.  
\begin{prop}
\label{prop-split}
A split extension $R \supseteq S$ is centrally projective (or has depth one) if and only if there is a bimodule projection $E: R \rightarrow S$ that
is a full idempotent in $U$.  In this case, the center $Z(S)$ is Morita equivalent to $\End {}_SR_S$.  
\end{prop}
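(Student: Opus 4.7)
The plan is to mirror the proof of Proposition~\ref{prop-sep}, working with the two-sided ideal $UEU \subseteq U$ and identifying its fullness condition with the classical central projectivity characterization of depth one discussed just after Definition~\ref{def-depth}.

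The first step is an explicit description of $UEU$. Two simple observations drive this: any $f \in U$ is $S$-$S$-bimodule linear, so $f(s) = s\,f(1) = f(1)\,s$ for $s \in S$, which forces $f(1) \in R^S$; and for any $g \in U$, the composite $E \circ g$ takes values in $S$, so it factors through a unique $S$-$S$-bimodule map $\phi_g \colon R \to S$, while conversely every such $\phi$ arises this way by composing with the inclusion $S \hookrightarrow R$. Combining these, an arbitrary element $\sum_i f_i E g_i$ of $UEU$ acts as
\[
r \;\longmapsto\; \sum_i f_i(\phi_{g_i}(r)) \;=\; \sum_i \phi_{g_i}(r)\,f_i(1),
\]
so $UEU$ is precisely the set of endomorphisms $r \mapsto \sum_i \phi_i(r)\,r_i$ with $\phi_i \in \Hom({}_SR_S,{}_SS_S)$ and $r_i \in R^S$. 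Notably this set is the same for every bimodule projection $E$.

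Since $UEU$ is a two-sided ideal of $U$, fullness of $E$ is equivalent to $1_U = \id_R \in UEU$, which by the preceding step means there exist $\phi_i, r_i$ as above with $\id_R = \sum_i \phi_i(-)\,r_i$. That is exactly the central projectivity equation of \cite{Mu}, equivalent via (\ref{eq: d1}) and \cite{LK2012} to $S \subseteq R$ having depth one. This gives the biconditional in one stroke, and as a bonus shows that under the depth one hypothesis every bimodule projection is automatically full.

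For the Morita statement, fullness of $E$ together with classical Morita theory \cite{AF} provides an equivalence between $U$ and the corner ring $EUE$, so it only remains to identify $EUE \cong Z(S)$. I would define $\Phi \colon EUE \to Z(S)$ by $\Phi(EfE) = z_f := E(f(1))$; the identity $s\,E(f(1)) = E(s\,f(1)) = E(f(1)\,s) = E(f(1))\,s$ puts $z_f$ in $Z(S)$, while the routine calculation $(EfE)(r) = E(r)\,z_f$ yields multiplicativity, injectivity, and surjectivity of $\Phi$. The one delicate spot in the whole argument is the first step, extracting the description of $UEU$ from the bimodule linearity of elements of $U$; once that is in hand, the biconditional and the Morita equivalence both drop out automatically.
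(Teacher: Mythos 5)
Your proof is correct and follows essentially the same route as the paper's: you identify $UEU$ with the set of maps $r \mapsto \sum_i \phi_i(r)\,r_i$ for $\phi_i \in \Hom({}_SR_S,{}_SS_S)$ and $r_i \in R^S$, equate fullness with the classical central projectivity equation, and identify the corner ring $EUE$ with $Z(S)$ exactly as in the paper. The only (cosmetic) difference is that you compute $UEU$ once and observe it is independent of the choice of bimodule projection, which handles both directions of the biconditional simultaneously, whereas the paper argues the two implications separately.
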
 
\begin{proof}
Suppose $E$ is a full idempotent in $U$, i.e., $UEU = U$. Since $$UEU = \{ \sum_i \alpha_i \circ E \circ \beta_i \|  \alpha_i, \beta_i \in U \} = \{ \sum_i \alpha_i(1_R) E(\beta_i(-)) \|  \alpha_i, \beta_i  \in U \}$$
and $\alpha_i(1_R) \in R^S$, then $$\id_R = \sum_{i=1}^q \alpha_i \circ E \circ \beta_i = \sum_i f_i \circ g_i$$
where $f_i := \alpha_i \circ E \in \Hom ({}_SS_S, {}_SR_S)$ is in fact multiplication by $\alpha(1_R) $, and $g_i := E \circ \beta_i \in \Hom ({}_SR_S, {}_SS_S)$. Since $\id_R = \sum_i f_i \circ g_i$ is equivalent to Eq.~(\ref{eq: d1}), it follows that the ring extension has depth one. 

Conversely, suppose $R \supseteq S$ has depth one.  By \cite[Lemma 1.9]{LK2012} there is a bimodule projection $E: R \rightarrow S$, which we view as an idempotent in $U$. Moreover, via Eq.~(\ref{eq: d1}) there are $f_i \in \Hom ({}_SS_S, {}_SR_S), g_i \in \Hom ({}_SR_S, {}_SS_S)$ such that $\id_R = \sum_i f_i \circ g_i$.  Then
$$ \id_R = \sum_i (f_i \circ E) \circ E \circ (E \circ g_i) \in UEU. $$ 

The final statement follows from $EUE = \{ E(\alpha(1_R))E \|  \alpha \in U \}
= Z(S) E$, since $E(R^S) = Z(S)$, as well as  $Z(S) \cong Z(S)E$. 
\end{proof}

Recall that a ring $R$ with nontrivial idempotent $e$ has \textit{corner ring} $eRe$, to which it passes on several properties studied in \cite[Chapter 21]{Lam1} and \cite[16.25, 18.15]{Lam2}. This is  captured in the following definition, which is related to the weaker notion of "symmetric separably divides" in \cite[Def.\ 2.1]{LK2018}.

\begin{definition}
\begin{rm}
A ring $A$ \textbf{Morita divides} a ring $B$ if there is a bimodule homomorphism $\mu: {}_BQ \otimes_A P_B \rightarrow {}_BB_B$ and a bimodule isomorphism (or epimorphism) $\nu: {}_AP \otimes_B Q_A \rightarrow {}_AA_A$, which are associative with respect to application to $Q \otimes P \otimes Q \rightarrow Q$ and $P \otimes Q \otimes P \rightarrow P$. 
\end{rm}
\end{definition}
It follows  that 
\begin{enumerate}
\item in case $\nu$ is an epi, then it is an isomorphism; 
\item $P_B$ and ${}_BQ$ are finite projective modules;
\item ${}_AP$ and $Q_A$ are generators; 
\item the (natural) $A$-$B$-bimodule homomorphism $P \rightarrow \Hom ({}_BQ, {}_BB)$, given by $p \mapsto \mu(- \otimes_A p)$, and the $B$-$A$-homomorphism $Q \rightarrow \Hom (P_B, B_B)$ given by $q \mapsto \mu(q \otimes_A -)$, are both isomorphisms;
\item the natural ring homomorphisms $A \rightarrow \End P_B$ and $A \rightarrow \End {}_BQ$ are isomorphisms.  Compare \cite[Ex.\ 22.5]{AF}
\end{enumerate}

Clearly, $A$ and $B$ are Morita equivalent if $A$ Morita divides $B$ and $B$ Morita divides $A$ with the same context bimodules.  
 For example, a ring $R$ is Morita divided by a corner ring $eRe$ since the bimodule isomorphism $eR \otimes_R Re \stackrel{\cong}{\longrightarrow} eRe$, given by $er \otimes_R r'e \mapsto err'e$, is associative w.r.t. the bimodule homomorphism $\mu: Re \otimes_{eRe} eR \rightarrow R$ given by $re \otimes er' \mapsto rer'$: of course, $eR$ and $Re$ are projective ideals in $R$.  The corner ring $eRe$ Morita divides $R$ iff $e$ is a full idempotent, i.e., the mapping $\mu$ is epi. This example is well-representative of Morita divides, due to (5) above.  

 A special case of corner rings occurs for example in the finite-dimensional Hopf algebra $H$ action on algebra $A$, where there is an idempotent $e$ in the smash product $A \# H$ such that the subalgebra of invariants,
$A^H \cong e(A \# H)e$, in case the trace function $A \rightarrow A^H$ is surjective \cite[4.3.4, 4.5]{Mo}.

A closer look at the last paragraphs of the proofs of Propositions~\ref{prop-sep} and~\ref{prop-split} makes the following proposition obvious.

\begin{prop}
Suppose $A$ is a ring extension of $B$.  If $A$ is a separable extension of $B$, then $T$ Morita divides $Z(A)$.  If $A$ is a split extension of $B$, then $U$ Morita divides $Z(B)$.  
\end{prop}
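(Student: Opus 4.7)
The plan is to recognize the statement as a direct instance of the corner-ring example discussed immediately before it, applied to the two idempotents produced at the ends of the proofs of Propositions~\ref{prop-sep} and~\ref{prop-split}.

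For the separable case, pick a separability idempotent $e \in T$. The final paragraph of the proof of Proposition~\ref{prop-sep} established that the map $eTe \to Z(A)$ sending $ef \mapsto f^1 f^2$ is a ring isomorphism, i.e., $Z(A)$ is (up to this identification) the corner ring of $T$ at the idempotent $e$. I would then invoke the corner-ring example verbatim, taking as context bimodules $P := Te$ and $Q := eT$, carrying the natural $T$- and $eTe \cong Z(A)$-bimodule structures. The two natural multiplication maps
\[
eT \otimes_T Te \longrightarrow eTe \cong Z(A), \qquad Te \otimes_{Z(A)} eT \longrightarrow T,
\]
together with associativity of multiplication in $T$, furnish the bimodule isomorphism and bimodule homomorphism required by the definition of Morita divides, and the two associativity identities with respect to $P\otimes Q\otimes P\to P$ and $Q\otimes P\otimes Q\to Q$ are automatic. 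No separate verification is needed beyond what was already done in the corner-ring example.

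The split case is entirely parallel, with $U$ replacing $T$, the bimodule projection $E \in U$ replacing $e$, and the ring isomorphism $EUE \cong Z(B)$ produced at the end of the proof of Proposition~\ref{prop-split} (via $E\alpha E \mapsto E(\alpha(1))$) replacing $eTe \cong Z(A)$. The context bimodules are $P := UE$ and $Q := EU$, and the analogous maps
\[
EU \otimes_U UE \longrightarrow EUE \cong Z(B), \qquad UE \otimes_{Z(B)} EU \longrightarrow U,
\]
both induced by composition in $U$, supply the Morita-divides data; associativity now comes from associativity of composition of endomorphisms in $U$.

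I expect no real obstacle: once the ring isomorphisms $eTe \cong Z(A)$ and $EUE \cong Z(B)$ from the earlier proofs are in hand, the proposition is essentially a restatement of the corner-ring example. The only genuine work is the bookkeeping of the two bimodule structures carried by $Te, eT$ (respectively $UE, EU$) once $eTe$ and $EUE$ are identified with $Z(A)$ and $Z(B)$. This is exactly the situation anticipated by the author's remark that ``a closer look at the last paragraphs of the proofs of Propositions~\ref{prop-sep} and~\ref{prop-split} makes the following proposition obvious.''
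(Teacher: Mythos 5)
Your proof is correct, and it is essentially the ``first proof'' that the author signals as obvious in the remark immediately preceding the proposition: you take the ring isomorphisms $eTe\cong Z(A)$ and $EUE\cong Z(B)$ from the final paragraphs of the proofs of Propositions~\ref{prop-sep} and~\ref{prop-split} and feed them into the corner-ring instance of Morita divides, with context bimodules $Te, eT$ (resp.\ $UE, EU$). The proof the paper actually prints is the alternative ``second proof'': it never touches the explicit idempotents, but instead applies the endomorphism-ring functor to the splittings ${}_AA\otimes_B A_A\cong {}_AA_A\oplus *$ and ${}_BA_B\cong {}_BB_B\oplus *$, together with $\End {}_AA_A\cong Z(A)$ and $\End {}_BB_B\cong Z(B)$, to realize $Z(A)$ and $Z(B)$ as corner rings of $T\cong \End {}_AA\otimes_B A_A$ and $U=\End {}_BA_B$ at the projections onto the distinguished summands. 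The two arguments are equivalent---under $T\cong\End {}_AA\otimes_B A_A$ the projection onto the summand ${}_AA_A$ \emph{is} a separability idempotent $e$---so your route buys explicitness about the context bimodules, while the paper's buys brevity and avoids re-deriving $eTe\cong Z(A)$. The one point worth making explicit, since the paper's own usage of the direction of ``Morita divides'' is not entirely consistent between the definition, the corner-ring example, and this proposition, is which of your two maps plays which role: $eT\otimes_T Te\to eTe\cong Z(A)$ is the isomorphism, while $Te\otimes_{Z(A)}eT\to T$ is merely a bimodule homomorphism (an isomorphism exactly when $e$ is full, i.e., in the H-separable case by Proposition~\ref{prop-sep}); the order in which you list the maps already reflects this correctly.
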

\begin{proof}
A second proof is obtained from noting ${}_AA \otimes_B A_A \cong {}_AA_A \oplus *$ for a separable extension, then apply $\End$ and $\End {}_AA_A \cong Z(A)$, from which $Z(A)$ is a corner ring of $T$.  Similarly, if $A$ is a split extension of $B$, we have ${}_BA_B \cong {}_BB_B \oplus *$, then
$\End {}_BA_B$ has corner ring $Z(B)$.  
\end{proof}

\begin{example}
\begin{rm}
Consider the finite cyclic monoid $$\{ 1,f,f^2,\ldots,f^n,\ldots,f^{2n}\}$$ subject to the relation $f^n = f^{2n}$.  Choosing a field $k$, denote the commutative monoid algebra $M(2n,n)$ generated by $f$ of dim $2n$ with idempotent $e = f^n$. This is also a bialgebra \cite{Mo}.  Since $(ef)^n = ef^n = e$ and $[(1-e)f]^n = (1-e)e = 0$, the idempotent decomposition  shows that
\begin{equation}
\label{eq: interesting}
M(2n,n) = eM(2n,n) \oplus (1-e)M(2n,n) \cong k\Z_n \times k[X]/(X^n)
\end{equation}
(an interesting contrast to the Taft Hopf algebra $H_n \cong k[X]/(X^n) \# k\Z_n$ of dimension $n^2$
if $k$ contains a primitive root-of-unity).  
The subalgebra $B$ generated by $(1-e)f$ is isomorphic to $k[X]/(X^n)$, forms a separable extension with respect to $M(2n,n)$ by Proposition~\ref{prop-rad}, if $k$ has characteristic zero.  The algebra extension $M(2n,n) \supset B$ is also 
a split extension (as $B$-modules) with a nonprojective direct summand $eM(2n,n)$. Both $M(2n,n)$ and $B$ are symmetric algebras, so this algebra extension is  not Frobenius (an interesting supplement to the examples in \cite{CK}). In addition,
the bialgebra $M(2n,n)$ has corner algebra the (Hopf) group algebra $k\Z_n$.  
\end{rm}
\end{example}

As a final remark, the endomorphism rings of the ring extension, in which $e$ or $E$ are full idempotents, are the centralizers A and B, up to ring isomorphism and with suitable hypotheses on the ring extension, in the Hasse diagram of centralizers in \cite[Figure 1]{KN2}.  The centralizer algebras $A$ and $B$  receive two nondegenerate pairings that provide dual Hopf algebra  structures on these and show $R \supseteq S$ is a Hopf-Galois extension: the depth two condition required in all four publications  \cite{KS, KN, KN2, KN3} is automatically satisfied by the depth one or H-depth one conditions above.  The Hopf algebroid structures on $T$ and $U$ above are sketched in \cite[4.7, 4.8, 5.7, 5.8]{KS}.

\section{Endomorphism ring theorem for uniquely separable extension}

A \textit{uniquely separable extension} is a ring extension $A \| B$, which is a separable extension having a unique separability idempotent.  In this section we prove an endomorphism ring theorem for separable extensions $A \| B$ showing that separability idempotents are in one-to-one correspondence with bimodule projections from $\End A_B$ onto $A$ (embedded as  left multiplication endomorphisms).  For this reason, we  define
a \textit{uniquely split extension} $A \| B$ as a ring extension possessing a unique bimodule projection
(or conditional expectation) $\mathcal{E}: {}_BA_B \rightarrow {}_BB_B$ with respect to the natural bimodules and $E(1_A) = 1_B$ (where $1_B = 1_A$ perhaps through an identification such as $\lambda_{1_A} = \id_A$,
$\lambda$ left multiplication).   

\begin{theorem}
Suppose $A \| B$ is a ring extension such that the natural module $A_B$ is finitely generated projective.
Then separability idempotents of $A \| B$ are in 1-1 correspondence with bimodule projections
$\mathcal{E}: \End A_B \rightarrow A$.
\end{theorem}
\begin{proof}
Suppose that $\{x_i \}$ $\{ f_i \in \Hom (A_B, B_B) \}$ is a finite projective base for the module $A_B$. We then have
$a = \sum_i x_i f_i(a)$ for every $a \in A$ as well as $\alpha = \sum_i \alpha(x_i) f_i$ for every $\alpha \in \End A_B$.  

Given a separability idempotent $e = e^1 \otimes_B e^2$ in $(A \otimes_B A)^A$, define a mapping
$\mathcal{E}: \End A_B \rightarrow A$ by $\mathcal{E}(\alpha) = \alpha(e^1)e^2$ for every
$\alpha \in \End A_B$.  Since $e^1 e^2 = 1_A$, it follows that $\mathcal{E}(\id_A) = 1_A$. 
Moreover, for every $a, a' \in A$, observe that
$$\mathcal{E}(\lambda_a \alpha \lambda_{a'}) = a\alpha(a'e^1)e^2 = a\alpha(e^1)e^2a' = a\mathcal{E}(\alpha)a',$$
wherefore $\mathcal{E}$ is a bimodule projection.  This defines a mapping from separability idempotents to conditional expectations on the endomorphism ring.

Conversely, given a bimodule projection $\mathcal{F}: \End A_B \rightarrow A$ satisfying
$\mathcal{F}(\lambda_a) = a$ for every $a \in A$ and $\mathcal{F}(\lambda_a \alpha \lambda_{a'}) = a\mathcal{F}(\alpha)a'$ for
every $a,a' \in A$ and $\alpha \in \End A_B$, define the element in $\sum_i x_i \otimes_B \mathcal{F}(f_i) \in A \otimes_B A$ via an obvious identification of $\Hom (A_B,B_B) \into \End A_B$.  
Note that $$\sum_i x_i \mathcal{F}(f_i) = \sum_i \mathcal{F}(\sum_i \lambda_{x_i} f_i) = \mathcal{F}(\id_A ) = 1_A.$$
Since $\sum_i x_i \otimes_B f_i \in A \otimes_B \Hom (A_B,B_B) \cong \End A_B$ is central with respect to the $A$-bimodule actions, and $\mathcal{F}$ is an $A$-$A$-bimodule homomorphism, we compute for all $a \in A$, 
$$\sum_i ax_i \otimes_B \mathcal{F}(f_i) = \sum_i x_i \otimes_B \mathcal{F}(f_i \lambda_a) = 
\sum_i x_i \otimes_B \mathcal{F}(f_i) a.$$
This then defines a mapping from conditional expectations to separability idempotents.

The two mappings above are inverses to one another, since
$$ \sum_i x_i \otimes_B f_i(e^1)e^2 = \sum_i x_i f_i(e^1) \otimes_B e^2 = e $$
for every separability idempotent $e \in (A \otimes_B A)^A$, and
$$ \alpha \longmapsto  \sum_i \alpha(x_i) \mathcal{F}(f_i) = \mathcal{F}(\sum_i \alpha(x_i)f_i) = \mathcal{F}(\alpha)$$
for every $\alpha \in \End A_B$.  It follows that either mapping is a one-to-one correspondence of separability idempotents with conditional expectations, as stated in the theorem.
\end{proof}

\begin{cor}
A right projective ring extension $A \| B$ is uniquely separable if and only if $\End A_B$ is a uniquely split extension of $A$.  
\end{cor}

\begin{example}
\begin{rm}
Let $G$ be a finite group and $H$ a subgroup of $G$, $k$ a commutative ring, $A$ the group algebra $kG$ and $B$ the subgroup algebra $kH$ in $A$.  Let $R$ denote the centralizer $A^B$.  Let $E: A \rightarrow B$ be the canonical bimodule projection
${}_BA_B \rightarrow {}_BB_B$ given by $E(\sum_{g \in G} a_g g) = \sum_{h \in H} a_h h$.  
Since $E$ is a Frobenius homomorphism with dual bases a transversal of $H$ in $G$, and their inverses, it follows that $E$ is a free $A$-generator of the $B$-dual of $A$, hence $E$ is a free $R$-generator of
$\Hom ({}_BA_B, {}_BB_B)$.  

Suppose that the centralizer $C_G(H)$ has an element $z$ not in $H$.  Then  with $r = 1_G + z$, the bimodule mapping $Er$ satisfies
$Er(1_A) = E(1 + z) = 1_A$, i.e., $Er$ is another bimodule projection of $A$ onto $B$. Hence, under this circumstance,  $A$ is not 
a unique split extension of $B$.
\end{rm}
\end{example}
The next theorem covers when group algebra extensions are unique split extensions.
\begin{theorem}
\label{theorem-uniquesplit}
Suppose $A \| B$ is a Frobenius extension of algebras over a field, where a Frobenius homomorphism
$E: A \rightarrow B$ satisfies $E(1) = 1$.  Then $A$ is a uniquely split extension of $B$ if and only if its centralizer equals the center,  $A^B = Z(B)$.
\end{theorem}
\begin{proof}
We prove the contrapositive.  Since $E$ freely generates $\Hom (A_B, B_B)$ over $A$, $E$ also freely generates
$\Hom ({}_BA_B, {}_BB_B))$ over $A^B := R$.  Of course, $R \supseteq Z(B)$.  

($\Leftarrow$) Suppose there are two distinct bimodule projections $E$ and $Er$ of $A$ onto $B$, for some $r \in R$.
Then $E(r) = 1$, so that $E(1-r) = 0$.  But $E|_R:  R \rightarrow Z(B)$ is a projection, so that
$$\dim Z(B) + \dim \ker E|_R = \dim R.$$
Since $0 \neq 1-r \in \ker E|_R$, it follows that $R \neq Z(B)$.

($\Rightarrow$) If $R \neq Z(B)$, we have $\dim R > \dim Z(B)$, so $\dim \ker E|_R > 0$.  Then there is $r \in R - Z(B)$ such that $E(r) = 0$.  Then $1+ r \neq 1$, so that $E \cdot (1+r) \neq E$, but $E(1+r) = 1$,
whence $E \cdot (1+r)$ is another bimodule projection $A \rightarrow B$, i.e., $A$ is a nonuniquely split extension of $B$.  
\end{proof}

\section{Uniquely separable Frobenius extensions}
\label{three}

Recall that a Frobenius (ring) extension $A \supseteq B$ is characterized by having a (Frobenius)
homomorphism $E: A \rightarrow B$ in $\Hom ({}_BA_B, {}_BB_B)$ with elements (dual bases)
$x_i, y_i \in A$ ($i = 1,\ldots,n$) such that $\id_A = \sum_{i=1}^n E(-x_i)y_i = \sum_{i=1}^m x_iE(y_i -)$.  Equivalently, $A_B$ is finite projective and $A \cong \Hom (A_B, B_B)$ as natural $B$-$A$-bimodules:  see \cite{NEFE} for more details.  For example, given a group $G$ and subgroup $H$
of finite index $n$ with right coset representatives $g_1,\ldots,g_n$, $K$ an arbitrary commutative
ring, the group algebra $A = KG$ is a Frobenius extension of the group subalgebra $B = KH$
with $E: A \rightarrow B$ the obvious projection defined by $E(\sum_{g \in G} a_g g) = \sum_{h \in H} a_h h$ and dual bases $x_i = g_i^{-1}, y_i = g_i$.  

The element $\sum_i x_iy_i := [A:B]_E$ is for all Frobenius extensions in the center $Z(A)$ by the short computation, $\sum_i x_iy_ia = \sum_{i,j} x_iE(y_iax_j)y_j = \sum_j ax_j y_j$.  This element
is sometimes called the $E$-index, independent of the choice of dual bases for the Frobenius homomorphism \cite{LK95, NEFE}.
In the group algebra extension example above note that $\sum_{i=1}^n x_i y_i = n1$, which is invertible if and only if $A \supseteq B$ is a separable extension \cite{SH}.  In this case a separability idempotent 
is given by $\frac{1}{|G:H|} \sum_{i=1}^n g_i^{-1} \otimes_{KH} g_i$.  We study in general terms
when this separability idempotent is unique (e.g., in contrast to the $n$ different separability idempotents in Example~\ref{example-matrixalgebra}).  Call a separable Frobenius extension satisfying the hypotheses in the theorem a \textit{uniquely separable Frobenius extension}.

\begin{theorem}
\label{th-SH}
Given Frobenius extension $A \supseteq B$ with Frobenius homomorphism $E: A \rightarrow B$
and dual bases $x_i, y_i \in A$ such that $\sum_i x_i y_i := [A: B]_E$ is an invertible element in $Z(A)$, then
$Z(A) = A^B$ if and only if $A \supseteq B$ has the unique separability element ${[A:B]_E}^{-1} \sum_i x_i \otimes_B y_i$.  
\end{theorem}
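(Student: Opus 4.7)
The plan is to parametrize every separability element through a natural bijection $\psi: A^B \to (A \otimes_B A)^A$ arising from the Frobenius structure, and to read off the condition $\mu(e) = 1$ in those coordinates. Under $\psi$ the multiplication map $\mu$ corresponds to the $Z(A)$-linear map $\alpha: A^B \to Z(A)$, $\alpha(r) = \sum_i x_i r y_i$, whose restriction to $Z(A) \subseteq A^B$ is multiplication by the invertible central element $[A:B]_E$. Uniqueness of the separability element then becomes injectivity of $\alpha$, and the automatic inclusion $Z(A) \subseteq A^B$ forces the stated equivalence.

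First I would verify that $e_0 := [A:B]_E^{-1}\sum_i x_i \otimes_B y_i$ is a separability element: the $A$-centrality of $e_1 := \sum_i x_i \otimes_B y_i$ is the standard Frobenius calculation (expand $ax_i$ as $\sum_j x_j E(y_j a x_i)$, slide the $B$-scalars across the tensor, reassemble by $\sum_i E(b x_i)y_i = b$), the element $[A:B]_E = \sum_i x_i y_i$ lies in $Z(A)$ by the same identities, and $\mu(e_0) = 1$ by construction. Next I would define $\psi(r) := \sum_i x_i r \otimes_B y_i$ and check by a near-identical computation that $\psi(r) \in (A\otimes_B A)^A$ for $r \in A^B$; here the $A^B$-hypothesis is used precisely to commute $r$ past the $B$-scalars $E(y_j c x_i)$. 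Bijectivity of $\psi$ comes from the classical Frobenius isomorphism $A \otimes_B A \cong \End(A_B)$, $f \mapsto \sum_i f(x_i) \otimes y_i$, which restricts on $A$-invariants to $(A\otimes_B A)^A \cong \End({}_AA_B) \cong A^B$; a concrete inverse is $\phi(\sum_k u_k \otimes v_k) := \sum_k u_k E(v_k)$, with $\phi\psi = \id_{A^B}$ following from $\sum_i x_i E(y_i) = 1$ and the fact that $r \in A^B$ commutes with $E(y_i) \in B$.

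With $\alpha := \mu \circ \psi$ in hand, the computation $\alpha(z) = z[A:B]_E$ for $z \in Z(A)$ shows $\alpha|_{Z(A)}$ is an isomorphism onto $Z(A)$; in particular $\psi([A:B]_E^{-1}) = e_0$ and $\alpha$ is always surjective. Since separability elements correspond under $\psi$ bijectively to $\alpha^{-1}(1) \subseteq A^B$, uniqueness is equivalent to injectivity of $\alpha$. If $Z(A) = A^B$, then $\alpha = \alpha|_{Z(A)}$ is injective, giving uniqueness. Conversely, if $Z(A) \subsetneq A^B$, pick $a \in A^B \setminus Z(A)$ and set $z := [A:B]_E^{-1}\alpha(a) \in Z(A)$; then $a - z \in \ker\alpha \setminus \{0\}$, producing a second separability element $e_0 + \psi(a-z)$. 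The main obstacle I anticipate is the surjectivity of $\psi$: $A$-centrality and injectivity of $\psi$ are routine dual-basis manipulations, but surjectivity rests on the Frobenius isomorphism $A \otimes_B A \cong \End(A_B)$, which is where the Frobenius hypothesis earns its keep.
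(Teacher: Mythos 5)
Your proposal is correct and follows essentially the same route as the paper: both parametrize $(A\otimes_B A)^A$ by the centralizer $A^B$ via $r\mapsto\sum_i x_i r\otimes_B y_i$, observe that composing with $\mu$ gives a $Z(A)$-valued map that on $Z(A)$ is multiplication by the invertible $E$-index, and in the converse direction manufacture a second separability element from a nonzero kernel element. The only cosmetic difference is that the paper packages your $\alpha$ as the projection $\pi(r)=e^1re^2$ of $A^B$ onto $Z(A)$ and argues $\ker\pi=0$, which is the same map up to the invertible factor $[A:B]_E$.
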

\begin{proof}
Let $R$ denote the centralizer $A^B$.  
Recall that $A \otimes_B A \cong \End A_B$ via $a \otimes_B a' \mapsto \lambda_a \circ E \circ \lambda_{a'}$ (with inverse $f \mapsto \sum_i f(x_i) \otimes_B y_i$).  Consequently,
$(A \otimes_B A)^A \cong \End {}_AA_B \cong R$ via $e = e^1 \otimes_B e^2 \mapsto
e^1E(e^2-) \mapsto e^1E(e^2)$.  These have inverse mappings given by
$r \mapsto \rho_r \mapsto \sum_i x_i r \otimes_B y_i$.  

($\Rightarrow$) Suppose $e_j = \sum_i x_i r_j \otimes_B y_i$ for $r_j \in R, j = 1,2$ are two
separability idempotents in $(A \otimes_B A)^A$.  In particular, $\sum_i x_i r_j y_i = 1$ for $j = 1,2$.
But we assume $R = Z(A)$, so $r_j [A:B]_E = 1$ for both $j = 1,2$.  Thus, $r_1 = [A:B]_E^{-1} = r_2$.  

($\Leftarrow$) Denote by $e = e^1 \otimes_B e^2$ the unique separability idempotent.  Then
$\pi: R \rightarrow Z(A)$ defined by $\pi(r) = e^1 r e^2$ is a $Z(A)$-linear projection onto $Z(A)$.
If $\ker \pi \neq 0$, then there is $0 \neq x \in R$ such that $\pi(x) = 0$.  Then $r = [A:B]_E^{-1} + x \in R$
and $\sum_i x_i r \otimes_B y_i = e + f$, where $0 \neq f \in (A \otimes_B A)^A$
(since $E(x_i x)y_i = \eta(x) \neq 0$ where $\eta: R \rightarrow R$ is the Nakayama automorphism) and $\mu(f) =
[A:B]_E \pi(x) = 0$.  It follows that $e + f$ is  another separability element, contradicting the hypothesis.
Hence $\ker \pi = 0$ and $R = Z(A)$.
\end{proof}

In connection with the proof of the next proposition, we recall the next theorem.   
\begin{theorem}\cite[Burciu]{B}
\label{theorem-burciu}
If $B \subseteq A$ is a subalgebra pair of \textit{semisimple complex} algebras, then $d(B,A) = 1$
if and only if the centers satisfy $$Z(B) \subseteq Z(A).$$
\end{theorem}
\begin{proof}
By Lemma~\ref{lemma-d1}, the implication 
$\Rightarrow$ is more generally true for any centrally projective ring extension.
 It is known that a semisimple complex algebra-subalgebra pair form a split, separable Frobenius algebra extension \cite{BuK, BKK, NEFE}.  The implication $\Leftarrow$ is established in \cite{B}. 
\end{proof}
\begin{cor}
\label{cor-contain}
If $B \subseteq A$ is a uniquely separable Frobenius extension of semisimple complex algebras, then
$d(B,A) = 1$.  
\end{cor}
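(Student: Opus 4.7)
The plan is to chain Theorem~\ref{th-SH} with Burciu's depth-one characterization recalled just above the corollary. First I would note that any subalgebra pair $B \subseteq A$ of semisimple complex algebras is automatically a separable Frobenius extension whose $E$-index $[A:B]_E$ is an invertible element of $Z(A)$; this is the standard fact referenced from \cite{BK, BKK, NEFE} in the paragraph immediately preceding the corollary, and it ensures that the hypotheses of Theorem~\ref{th-SH} genuinely apply to $B \subseteq A$.

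With those hypotheses in place, Theorem~\ref{th-SH} says precisely that unique separability is equivalent to the centralizer identity $A^B = Z(A)$. So the ``uniquely separable'' assumption supplies the equality $A^B = Z(A)$ at once, with no further work.

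The remaining step is the essentially trivial inclusion $Z(B) \subseteq A^B$, which holds for any ring extension because every element of $Z(B)$ commutes with all of $B$ and hence lies in the centralizer of $B$ inside $A$. Combining $Z(B) \subseteq A^B = Z(A)$ gives exactly the input required by Burciu's theorem, which in the semisimple complex setting yields $d(B,A) = 1$, as recalled.

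The only conceivable obstacle is verifying that $[A:B]_E$ is invertible in the semisimple complex case so that Theorem~\ref{th-SH} is applicable; this is however routine, since a semisimple $\mathbb{C}$-algebra pair admits a Frobenius homomorphism whose index is a nonzero central element, and every nonzero element of $Z(A)$ is a unit in the semisimple center. Given this, the corollary follows in three short lines with no further computation.
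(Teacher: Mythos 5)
Your proposal is correct and follows essentially the same route as the paper: apply Theorem~\ref{th-SH} to get $A^B = Z(A)$, observe $Z(B) \subseteq A^B$, and invoke Burciu's characterization. The only difference is that you spend effort verifying the invertibility of $[A:B]_E$, which is already built into the paper's definition of a uniquely separable Frobenius extension, so that check is harmless but redundant.
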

\begin{proof}
The centralizer $R$ of an extension always contains the centers $Z(A)$ and $Z(B)$.  From the hypothesis and the theorem, $Z(A) = R \supseteq Z(B)$, and apply the theorem.  
\end{proof}
From \cite{SH, S} and group theory, the following is a common setup.
\begin{prop}
If the dual bases $x_i, y_i$ of a separable Frobenius extension $A \supseteq B$ (where $\sum_i x_iy_i$
is invertible) may be chosen from $Z(A)$, then $A \supseteq B$ is uniquely separable Frobenius and has depth $1$.  
\end{prop}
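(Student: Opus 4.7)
The plan splits into two independent verifications: unique separability and depth one. Separability itself is immediate from invertibility of $[A:B]_E$, since $[A:B]_E^{-1}\sum_i x_i \otimes_B y_i$ is then a separability element. For uniqueness I will invoke Theorem~\ref{th-SH}, which reduces the claim to the centralizer identity $A^B = Z(A)$.

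The essential computation is this centralizer identity. The inclusion $Z(A) \subseteq A^B$ is automatic, so I only need $A^B \subseteq Z(A)$. Given $r \in A^B$ and $a \in A$, apply the Frobenius relation $a = \sum_i x_i E(y_i a)$ to compute
\begin{equation*}
ar \;=\; \sum_i x_i E(y_i a)\, r \;=\; \sum_i x_i\, r\, E(y_i a) \;=\; \sum_i r\, x_i E(y_i a) \;=\; ra,
\end{equation*}
where the second equality uses that $E(y_i a) \in B$ commutes with $r \in A^B$, and the third uses that $r$ commutes with the central element $x_i \in Z(A)$. Hence $r \in Z(A)$, and Theorem~\ref{th-SH} delivers unique separability.

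For depth one I use the central projectivity characterization recalled in Section~\ref{one}: it suffices to exhibit $h_i \in \Hom({}_B A_B, {}_B B_B)$ and $r_i \in A^B$ with $\id_A = \sum_i h_i(-)\, r_i$. Using the other Frobenius identity $a = \sum_i E(a x_i) y_i$, set $h_i(a) := E(a x_i)$ and $r_i := y_i$. Left $B$-linearity of $h_i$ is inherited from $E$; right $B$-linearity uses $x_i \in Z(A)$, since $h_i(ab) = E(abx_i) = E(ax_i b) = E(ax_i)b = h_i(a)b$. As $y_i \in Z(A) \subseteq A^B$, the relation $\id_A = \sum_i h_i(-)\, y_i$ is exactly the central projectivity datum, so $d(B,A) = 1$. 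There is no substantive obstacle; the centrality of $x_i, y_i$ is invoked twice in slightly different guises: to push $r \in A^B$ past the $x_i$ (and so past $E(y_i a) \in B$) in the first argument, and to push $b \in B$ past the $x_i$ inside $E$ in the second.
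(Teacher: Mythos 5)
Your proof is correct, and both halves go through. The depth-one half is exactly the paper's argument: $a=\sum_i E(ax_i)y_i$ with $E(-x_i)\in\Hom({}_BA_B,{}_BB_B)$ and $y_i\in Z(A)\subseteq A^B$ is the central projectivity datum (you supply the routine check of right $B$-linearity of $E(-x_i)$ via centrality of $x_i$, which the paper leaves implicit). The uniqueness half takes a genuinely different route. The paper argues directly on the parametrization of Casimir elements: every element of $(A\otimes_BA)^A$ has the form $\sum_i x_i r\otimes_B y_i$ with $r\in A^B$, and the normalization $\sum_i x_i r y_i=1$ forces $r\,[A:B]_E=1$ once the central $x_i,y_i$ are pulled past $r$, so $r$ is unique --- no appeal to Theorem~\ref{th-SH} and no need to identify $A^B$ with $Z(A)$. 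You instead prove the centralizer identity $A^B=Z(A)$ (a clean computation pushing $r\in A^B$ past $E(y_ia)\in B$ and past the central $x_i$) and then cite the forward direction of Theorem~\ref{th-SH}. Your detour is slightly longer but yields the structural byproduct $A^B=Z(A)$ explicitly, which is the hypothesis driving Corollary~\ref{cor-contain} and makes the link to Burciu's depth-one criterion transparent; the paper's direct argument is shorter and self-contained. Both uses of centrality are sound, and there is no gap.
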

\begin{proof}
Since any (Casimir) element in $(A \otimes_B A)^A$ may be written as $\sum_i x_i r \otimes_B y_i$
where $x_i, y_i \in Z(A)$,  if we moreover assume that $\sum_i x_i r y_i = 1$, then $r(\sum_i x_i y_i) = 1$, whence $r$ is unique.  This proves that $A \supseteq B$ is a uniquely separable Frobenius extension. 

From $a = \sum_i E(ax_i)y_i$ for all $a \in A$,  where $y_i \in R$ and $E(-x_i) \in \Hom ({}_BA_B, {}_BB_B)$ for each $i$.  This characterizes central projectivity of $A$ over $B$, i.e., $d(B,A) = 1$.  
\end{proof}

We briefly dispose of a question about what are uniquely split group algebra extensions.

\begin{theorem}
If $B \subseteq A$ is a subalgebra pair of semisimple complex algebras, then $d_h(B,A) = 1$ if and only if
the centers satisfy $$Z(A) \subseteq Z(B).$$
\end{theorem}
\begin{proof}
Recall that an algebra extension of semisimple complex algebras is a split separable Frobenius extension. 
Thus the natural module $A_B$ is a progenerator, whence $B$ and $\End A_B$ are Morita equivalent. 
The implication $\Rightarrow$ is more generally true by Lemma~\ref{lemma-hsep}.  Assume now that $Z(A) \subseteq Z(B)$. 
The endomorphism ring extension $A \into \End A_B$ has depth 1 by Theorem~\ref{theorem-burciu}, since $Z(A) \subseteq Z(B) \cong Z(\End A_B)$ by Morita theory and graphically.  Then $B \subseteq A$ has H-depth 1 by  \cite[Theorem 4.2]{LK2011}. 
\end{proof} 
\begin{cor}
A uniquely split group complex algebra extension $A \supseteq B$ of finite groups is a trivial extension, $A = B$.
\end{cor}
\begin{proof}
From Theorem~\ref{theorem-uniquesplit}, $Z(B) = A^B$.  Then $Z(A) \subseteq Z(B)$.  It follows from the last theorem that
$A$ is H-separable over $B$.  By \cite[Corollary 3.3]{K2013} $A = B$.  
\end{proof}
The corollary is more generally true for algebra extensions of semisimple  Hopf algebras over an algebraically closed field of characteristic zero, as the reader may check in the cited sources.  
\section{Group algebra extensions}
\label{four}

Let $A = kG$ where $G$ is a finite group with subgroup $H < G$, and let $B = kH \subseteq A$ where $k$ is a field containing the inverse of $|G: H| 1$.  Then $A \supseteq B$ is a split, separable Frobenius extension.  If
$g_1,\ldots,g_n \in G$ is a right transversal of $H$ in $G$, then $e = \frac{1}{|G:H|} \sum_{i=1}^n {g_i}^{-1} \otimes_B g_i$ is a separability element.  The next theorem is a consequence of a theorem
by Singh-Hanna in \cite{SH}, which we show is also a consequence of Theorem~\ref{th-SH}.  
\begin{theorem}[\cite{SH}]
A separable finite group algebra extension $A \supseteq B$ has unique separability element $e \in
A \otimes_B A$ if and only if it satisfies the property: \newline
(S) Any conjugacy class in $G$ is an $H$-orbit.
\end{theorem}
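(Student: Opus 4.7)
The plan is to apply Theorem~\ref{th-SH} to the group algebra extension and then translate the condition $Z(A) = A^B$ into the combinatorial condition (S) via an explicit basis argument.

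First I would verify that the hypotheses of Theorem~\ref{th-SH} hold. With the Frobenius system $E,\ x_i = g_i^{-1},\ y_i = g_i$ described in Section~\ref{three}, one computes $[A:B]_E = \sum_i g_i^{-1} g_i = n \cdot 1_A$, which is invertible in $Z(A)$ by the assumption that $|G:H|1$ is a unit in $k$. Thus $A \supseteq B$ is a Frobenius extension with invertible $E$-index in $Z(A)$, and Theorem~\ref{th-SH} reduces the problem to proving the equivalence
\[
A^B = Z(A) \iff \text{(S) every conjugacy class of } G \text{ is a single } H\text{-orbit.}
\]

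Next I would describe bases of $A^B$ and $Z(A)$ inside $kG$. Since $H$ acts on $G$ by conjugation and $kH$ is spanned by $H$, an element $a = \sum_{g \in G} a_g g$ lies in $A^B$ iff $a_g = a_{hgh^{-1}}$ for all $h \in H$, i.e.\ iff its support decomposes into $H$-orbits on which $a_g$ is constant. Hence the $H$-orbit sums $\{ \sigma_{\mathcal{O}} := \sum_{x \in \mathcal{O}} x \}$, indexed by $H$-conjugacy orbits $\mathcal{O} \subseteq G$, form a $k$-basis of $A^B$. By the classical computation of $Z(kG)$, the conjugacy class sums $\{ \kappa_{\mathcal{C}} := \sum_{x \in \mathcal{C}} x \}$, indexed by $G$-conjugacy classes $\mathcal{C}$, form a $k$-basis of $Z(A)$.

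Finally I would compare these two bases. Every conjugacy class $\mathcal{C}$ of $G$ is $H$-stable and hence partitions as a disjoint union of $H$-orbits $\mathcal{C} = \bigsqcup_j \mathcal{O}_j$; correspondingly $\kappa_{\mathcal{C}} = \sum_j \sigma_{\mathcal{O}_j}$. This shows $Z(A) \subseteq A^B$ unconditionally. The reverse inclusion $A^B \subseteq Z(A)$ holds iff each basis element $\sigma_{\mathcal{O}}$ lies in the span of class sums, which (since the class sums are linearly independent and supported on pairwise disjoint $G$-stable subsets) forces each $H$-orbit $\mathcal{O}$ to coincide with a full conjugacy class of $G$. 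That is exactly condition (S). Combined with Theorem~\ref{th-SH}, this gives the desired equivalence.

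The only potential obstacle is ensuring that Theorem~\ref{th-SH} genuinely applies in this setting, i.e.\ that the invertibility of $[A:B]_E = |G:H| \cdot 1$ follows from the standing assumption on $k$; but this is immediate. The rest is a clean basis comparison, with no further subtlety.
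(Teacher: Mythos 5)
Your proposal is correct and follows essentially the same route as the paper: both reduce the statement to the equivalence $A^B = Z(A)$ via Theorem~\ref{th-SH} and then identify $Z(A)$ with the span of conjugacy class sums and $A^B$ with the span of $H$-orbit sums, so that (S) is exactly the condition that these coincide. Your write-up is somewhat more explicit (verifying the invertibility of $[A:B]_E$ and doing the basis comparison by refinement of classes into orbits rather than by counting dimensions), but there is no substantive difference.
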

\begin{proof}
It is well-known and easy to check that the sum of elements in a conjugacy class is in the center of a group algebra $kG$, and that the dimension of the center is equal to the number of conjugacy classes of $G$. Similarly,
the sum of elements in an $H$-orbit of $G$ (under conjugation) is in the centralizer $R = A^B$ of $B$ in $A$, and the dimension of $R$ is the number of distinct $H$-orbits in $G$.  Thus the condition is equivalent to
$R = Z(A)$, which in turn by Theorem~\ref{th-SH} is equivalent to uniqueness of the separability element $e$. 
\end{proof}

For example, $G = Q_8 = \{ \pm 1, \pm i, \pm j, \pm k \} \geq H = \{ \pm 1, \pm i \}$ does not satisfy property (S).
In \cite[Theorem 3.8]{SH} it is shown that for groups $G$ of order less than $64$, but different from  $48$,  each  subgroup $H$  satisfying the  property (S) also satisfies $G = HZ(G)$; see also \cite[Singh]{SS}.  
Notice that the following holds for centralizers of elements in a finite group $G$ with subgroup $H$.

\begin{lemma}[\cite{SH}]
\label{lemma-centralizers}
$H$ satisfies (S) in $G$ if and only if $G = HC_G(a)$ for every $a \in G$.
\end{lemma}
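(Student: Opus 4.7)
The plan is to reduce property (S) to an equality of indices, then translate that into the product set equality $G = HC_G(a)$ via the classical counting formula $|HK| = |H||K|/|H\cap K|$.

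First I would fix $a \in G$ and write down the two sets being compared in property (S). The $G$-conjugacy class of $a$ is the orbit of $a$ under $G$ acting by conjugation, with stabilizer $C_G(a)$, hence of size $[G:C_G(a)]$. The $H$-orbit of $a$ under the restricted conjugation action has stabilizer $C_H(a) = H \cap C_G(a)$, hence size $[H:H \cap C_G(a)]$. Since $H \leq G$, the $H$-orbit is always contained in the $G$-conjugacy class, so the two coincide if and only if their sizes agree, i.e.
\[
[G:C_G(a)] \;=\; [H:H \cap C_G(a)].
\]

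Next I would invoke the standard counting identity $|HK| = |H|\,|K|/|H\cap K|$ (valid for any subgroup $K$ of $G$, where $HK$ is just a subset in general), applied with $K = C_G(a)$. Rearranging,
\[
|HC_G(a)| \;=\; |G| \quad \Longleftrightarrow \quad \frac{|G|}{|C_G(a)|} \;=\; \frac{|H|}{|H\cap C_G(a)|},
\]
which is exactly the index equality above. Since $HC_G(a) \subseteq G$, the product has order $|G|$ precisely when $HC_G(a) = G$. Thus for each fixed $a$, the $H$-orbit of $a$ equals the $G$-conjugacy class of $a$ if and only if $G = HC_G(a)$.

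Finally I would quantify over $a \in G$: property (S) asserts that every $G$-conjugacy class is an $H$-orbit, equivalently that this holds for a representative of each class, equivalently that it holds for every $a \in G$. Combining with the previous step, this is the same as requiring $G = HC_G(a)$ for every $a \in G$, which is the lemma. There is no real obstacle here — the only subtlety is remembering that $HC_G(a)$ need not be a subgroup, so one must use the set-cardinality formula rather than a subgroup index argument, and one must verify that $C_H(a) = H \cap C_G(a)$ (immediate from the definition of centralizer).
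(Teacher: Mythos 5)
Your proof is correct, but it takes a different route from the paper. The paper argues element-wise in both directions: given (S), from $gag^{-1}=hah^{-1}$ it extracts $h^{-1}g\in C_G(a)$, hence $g\in HC_G(a)$; conversely, writing $g=hx$ with $x\in C_G(a)$ gives $gag^{-1}=hah^{-1}$ directly. That argument is purely formal and never uses finiteness of $G$. Your argument instead identifies the two orbits by comparing cardinalities --- $[G:C_G(a)]$ for the conjugacy class via orbit-stabilizer, $[H:H\cap C_G(a)]$ for the $H$-orbit --- and then converts the index equality into $|HC_G(a)|=|G|$ via the product formula $|HK|=|H|\,|K|/|H\cap K|$. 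This is a clean and standard reduction, and you are right to flag that $HC_G(a)$ need not be a subgroup and that one compares cardinalities of subsets of the finite group $G$; but the counting does genuinely depend on $G$ being finite, whereas the paper's element-chasing proof would go through verbatim for infinite groups (with $H$ of finite index or not). In the finite setting of the paper both proofs are complete; yours has the mild advantage of exhibiting the numerical criterion $[G:C_G(a)]=[H:C_H(a)]$ as an intermediate reformulation of (S), which is sometimes the form used in the group-theoretic literature.
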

\begin{proof}
If for every $g,a\in G$, there is $h \in H$ such that $gag^{-1} = hah^{-1}$,
then $h^{-1}g a = a h^{-1} g$, so that $h^{-1} g \in C_G(a)$.  Then $g \in HC_G(a)$,
whence $G = HC_G(a)$ for every $a \in G$.  

Conversely, given arbitrary $g, a \in G= HC_G(a)$, there is $h \in H, x \in C_G(a)$ such that
$g = hx$.  Then $gag^{-1} = hxax^{-1}h^{-1} = hah^{-1}$.  Thus $H < G$ satisfies (S).    
\end{proof}

Compare this to the characterization of depth $1$ in \cite{BK} of group algebra extensions over a field $k$ having characteristic zero:  $d_k(H,G) = 1$ if and only if $G = HC_G(h)$ for every $h \in H$, a weaker condition than the one in the lemma.  Thus for group algebras we may improve on Corollary~\ref{cor-contain} by noting the following.  
\begin{prop}
Suppose $k$ is a field of characteristic zero and $H$ a subgroup of a finite group $G$.  If
$kG \supseteq kH$ is uniquely separable, then it has depth $1$.
\end{prop}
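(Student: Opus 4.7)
The plan is to chain together the three characterizations already established in the paper. The hypothesis is that $kG \supseteq kH$ is uniquely separable; by the Singh-Hanna theorem proved just above, this is equivalent to property (S), namely that every conjugacy class of $G$ is an $H$-orbit. The goal is to upgrade this to depth one, which over a field of characteristic zero is controlled by the Boltje-K\"ulshammer criterion: $d_k(H,G) = 1$ if and only if $G = H C_G(h)$ for every $h \in H$.

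The bridge between these two conditions is Lemma~\ref{lemma-centralizers}, which translates property (S) into the statement that $G = H C_G(a)$ for \emph{every} $a \in G$. So my proof will simply unpack the hypotheses: from unique separability the Singh-Hanna theorem gives (S); from (S) the lemma gives $G = HC_G(a)$ for all $a \in G$; specializing to $a \in H \subseteq G$, this yields the a priori weaker statement $G = HC_G(h)$ for all $h \in H$; and finally the Boltje-K\"ulshammer characterization (valid because $\operatorname{char}(k) = 0$) delivers $d_k(H,G) = 1$.

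There is essentially no obstacle beyond citing the correct results; the characteristic-zero hypothesis is only used in the final step, since Boltje-K\"ulshammer's depth-one characterization depends on it (the Singh-Hanna results themselves only require that $|G:H|\cdot 1$ be invertible in $k$, which is automatic in characteristic zero). It is worth noting in the writeup that the implication is genuinely one-directional: unique separability corresponds to (S), which is strictly stronger than the Boltje-K\"ulshammer depth-one condition that restricts the centralizer identity to elements of $H$, so depth one does not conversely force unique separability.
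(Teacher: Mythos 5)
Your proof is correct and follows exactly the route the paper intends: the paper states this proposition without a separate proof precisely because the preceding paragraph already assembles the same chain (unique separability $\Leftrightarrow$ (S) via the Singh--Hanna theorem, (S) $\Leftrightarrow$ $G = HC_G(a)$ for all $a \in G$ via Lemma~\ref{lemma-centralizers}, then specialization to $a \in H$ and the Boltje--K\"ulshammer depth-one criterion in characteristic zero). Your additional remarks on where the characteristic-zero hypothesis enters and on the one-directionality of the implication are accurate and consistent with the paper's discussion.
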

 \begin{prop}
If a subgroup $H$ of a finite group $G$ satisfies $HZ(G) = G$, then $KG \supseteq KH$ is uniquely separable over any commutative ring $K$ where $|G: H| 1$ is invertible, and $KG \supseteq KH$ has depth $1$. 
\end{prop}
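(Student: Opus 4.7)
The plan is to reduce everything to the preceding proposition by exhibiting Frobenius dual bases that lie inside $Z(KG)$.

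First I would use the hypothesis $HZ(G) = G$ to choose a right transversal $g_1,\ldots,g_n$ of $H$ in $G$ with every $g_i \in Z(G)$. This is possible because $Z(G)$ is normal in $G$, so the second isomorphism theorem gives $G/H = HZ(G)/H \cong Z(G)/(H \cap Z(G))$; equivalently, given any coset $Hg$, write $g = hz$ with $h \in H$, $z \in Z(G)$, so $Hg = Hz$ has a representative in $Z(G)$. Take $g_1,\ldots,g_n$ to be any such set of representatives, where $n = |G:H|$.

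Next, recall from Section~\ref{three} that $KG \supseteq KH$ is a Frobenius extension with Frobenius homomorphism $E: KG \to KH$ the canonical projection, and dual bases $x_i = g_i^{-1}$, $y_i = g_i$. By our choice these lie in $Z(G) \subseteq Z(KG)$, so the dual bases may be chosen in $Z(A) = Z(KG)$. Moreover, the $E$-index $\sum_i x_i y_i = n \cdot 1_G$ is invertible in $Z(KG)$ because $|G:H|1 = n\cdot 1_K$ is invertible in $K$ by hypothesis. Hence $KG \supseteq KH$ is a separable Frobenius extension whose dual bases may be taken from the center of the overring.

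At this point the previous proposition applies directly and yields both conclusions simultaneously: the extension is uniquely separable and has depth $1$. The only non-formal ingredient is the central choice of transversal, which, as indicated, is immediate from the normality of $Z(G)$. No additional work is needed for the depth $1$ assertion, and in particular the result holds over any commutative ring $K$ in which $|G:H|\cdot 1_K$ is invertible, not merely over a field of characteristic zero.
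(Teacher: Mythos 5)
Your proof is correct, but it takes a genuinely different route from the paper's. You reduce the statement to the Section~\ref{three} proposition on Frobenius dual bases chosen from $Z(A)$, by first observing that $G = HZ(G)$ lets you pick a right transversal $g_1,\ldots,g_n$ inside $Z(G)$ (every coset $Hg = Hhz = Hz$ has a central representative), so that $x_i = g_i^{-1}$, $y_i = g_i$ lie in $Z(KG)$ and both conclusions fall out of the Frobenius machinery at once. The paper instead argues group-theoretically: from $g = hz$ it gets $gxg^{-1}=hxh^{-1}$, so property (S) holds and unique separability follows from the Singh--Hanna theorem; separately, $Z(G)\subseteq C_G(H)$ gives $G = HC_G(H)$, which is the sufficient condition of \cite[1.12]{BK} for depth $1$. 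Your approach is more self-contained (no appeal to \cite{BK}) and exhibits the unique separability idempotent explicitly as $\frac{1}{|G:H|}\sum_i g_i^{-1}\otimes_{KH} g_i$ with central $g_i$; the paper's approach ties the result to the (S) and centralizer characterizations developed in Section~\ref{four}. One difference in strength to note: the paper's route yields $d_K(H,G)=1$ for \emph{every} commutative ring $K$, with no invertibility hypothesis, whereas your reduction as written only delivers the depth $1$ conclusion when $|G:H|1_K$ is invertible, since the quoted proposition assumes a separable Frobenius extension. This is cosmetic rather than a gap: with $A = KG$, $B = KH$, your identity $a = \sum_i E(ag_i^{-1})g_i$ with $g_i \in Z(G)\subseteq A^B$ and $E(-\,g_i^{-1})\in \Hom({}_BA_B,{}_BB_B)$ is precisely the central projectivity condition and uses no invertibility at all, so you could detach the depth $1$ claim from the separability hypothesis if the stronger reading of the statement is intended.
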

\begin{proof}
Given $g,x \in G$, there is $h \in H$ and $z \in Z(G)$, such that $g = hz$.  Then $gxg^{-1} = hxh^{-1}$,
so $H$ satisfies the property (S) in $G$.  Also $Z(G) \subseteq C_G(H)$, so that $G = HZ(G) \subseteq HC_G(H)$.  Then $G = HC_G(H)$, the sufficient condition in \cite[1.12]{BK} for $\Z H \subseteq \Z G$
to have depth $1$, and therefore $d_K(H,G) = 1$ for any commutative ring $K$.  
\end{proof}



\begin{thebibliography}{XXXXXX}
\bibitem{AF} F.W.~Anderson and K.R.~Fuller,  \textit{Rings and Categories of Modules}, G.T.M.\ 13, Springer, 1992. 
\bibitem{BK2} R.~Boltje and B.~K\"ulshammer,
On the depth 2 condition for group algebra and Hopf algebra extensions, 
\textit{J.\ Algebra} \textbf{323} (2010), 1783-1796.
\bibitem{BK}{R.~Boltje and B.~K\"ulshammer,
Group algebra extensions of depth one, \textit{Algebra Number Theory} \textbf{5}
(2011), 63-73.} 
\bibitem{BDK} R.~Boltje, S.~Danz and B.~K\"ulshammer, On the depth of subgroups and group algebra extensions, \textit{J.\ Algebra} \textbf{335} (2011), 258--281.  
\bibitem{BuK} S.~Burciu and L.~Kadison, Subgroups of depth three,  \textit{Surv.\ Diff.\ Geom.} \textbf{XV} (2011), 17--36. 
\bibitem{BKK} S.~Burciu, L.~Kadison and B.~K\"ulshammer, On subgroup depth \textit{I.E.J.A.} \textbf{9} (2011), 133--166. 
\bibitem{B}S.~Burciu, Depth one extensions of semisimple algebras and Hopf subalgebras, \textit{Alg.\ Rep.\ Th.} \textbf{16} (2013), 1577--1586.
\bibitem{CK} S.~Caenepeel and L.~Kadison, Are biseparable extensions Frobenius? \textit{K-Theory} \textbf{24} (2001), 361--383.  
\bibitem{F}T.~Fritzsche, The depth of subgroups of $\mathrm{PSL}(2,q)$, \textit{J.\ Algebra} \textbf{349} (2011), 217--233.
\bibitem{FKR}T.~Fritzsche, B.~K\"ulshammer and C.~Reiche, The depth of Young subgroups of symmetric groups, \textit{J.\ Algebra} \textbf{381} (2013),  96--109. 
\bibitem{H}D.G.~Higman, Indecomposable representations at characteristic p, \textit{Duke Math.\ J.}
\textbf{7} (1954), 377--381.
\bibitem{Hir}K.~Hirata, Some types of separable extensions of rings, \textit{Nagoya Math.\ J.} \textbf{33} (1968), 107--115. 
\bibitem{HS}K.~Hirata and K.~Sugano, On semisimple and separable extensions of noncommutative rings, \textit{J.\ Math.\ Soc.\ Japan}\textbf{18}  (1966), 360--373.
\bibitem{J}J.-P.~Jans, The representation type of algebras and subalgebras, \textit{Can.\ J.\ Math.}
\textbf{10} (1957), 39--44.
\bibitem{LK95}L.~Kadison, On split, separable subalgebras with counitality condition, \textit{Hokkaido Math.\ J.} \textbf{24} (1995), 527--549.
\bibitem{NEFE} L.~Kadison, 
\textit{New examples of Frobenius extensions}, 
University Lecture Series \textbf{14},
Amer.\ Math.\ Soc., Providence,  1999. 
\bibitem{KN}L.~Kadison and D.~Nikshych, Hopf algebra actions on strongly separable extensions of depth two, \textit{Adv.\ in Math.} \textbf{163} (2001), 312--342.
\bibitem{KN2}{L.~Kadison and D.~Nikshych,
Weak Hopf algebras and Frobenius extensions,
\textit{J.\ Algebra} \textbf{163} (2001), 258--286.}
\bibitem{KN3}L.~Kadison and D.\ Nikshych,  Outer actions of Hopf algebra centralizers on separable extensions,
           \textit{Comm.\ Algebra} \textbf{30}  (2002), 383--410. 
\bibitem{KS} L.~Kadison and  K.~Szlach\'anyi,
Bialgebroid actions on depth two extensions and duality, \textit{Adv.\ in Math.} \textbf{179} (2003), 75--121.
\bibitem{KK}L.~Kadison and B.\ K\"{u}lshammer, Depth two, normality and a trace ideal condition for
 Frobenius extensions, \textit{Comm.\ Alg.} \textbf{34} (2006), 3103--3122. 
\bibitem{LK2011}L.~Kadison, Odd H-depth and H-separable extensions, \textit{Cen.\ Eur.\ J.\ Math.}
\textbf{10} (2012), 958--968.
\bibitem{LK2012}L.~Kadison, Subring depth, Frobenius extensions and towers, \textit{Int.\ J.\ Math.\ \& Math.\ Sci.} \textbf{2012}, article 254791. 
\bibitem{K2013}L.~Kadison, Hopf subalgebras and tensor powers of generalized permutation modules, \textit{J.\ Pure Appl.\ Alg.}  \textbf{218} (2014), 367--380.
\bibitem{LK2018}L.~Kadison, Separable equivalence of rings and symmetric algebras, \textit{Bull.\ Lond.\ Math.\ Soc.} \textbf{51} (2019), 344--352.
\bibitem{Lam1}T.Y.~Lam, \textit{A First Course in Noncommutative Rings}, Grad.\ Texts Math.\ \textbf{131}, Springer Verlag, 1991. 
\bibitem{Lam2}T.Y.~Lam, \textit{Lecture on Modules and Rings}, Grad.\ Texts Math.\ \textbf{189}, Springer Verlag, 1999. 
\bibitem{Mu}B.~M\"uller, Quasi-Frobenius-Erweiterungen, \textit{Math. Z.} \textbf{85} (1964), 345--368.
\bibitem{P}R.~Pierce, \textit{Associative Algebras}, G.T.M.\ \textbf{88}, Springer, 1982.  
\bibitem{SH}S.~Singh and L. A.-M.~Hanna, Separable group-ring extensions, \textit{Portugaliae Math.}
\textbf{53} (1996), 157--165.  
\bibitem{SS}S.~Singh, Outer automorphisms and separable group-ring extensions, \textit{Soochow J.\ Math.} \textbf{31} (2005), 61--70. 
\bibitem{S}K.~Sugano, On centralizers in separable extensions,  \textit{Osaka J.\ Math.} \textbf{7} (1970), 29--40. 

\end{thebibliography}
\end{document}